\documentclass[12pt,a4paper]{article}
\usepackage{latexsym,amssymb,amsfonts,amsmath,amsthm,nccmath,float,enumitem,setspace,bm,graphicx}
\usepackage[dvipsnames]{xcolor}
\usepackage[hidelinks]{hyperref}
\usepackage[margin=2cm]{geometry}

\hypersetup{
    colorlinks,
    linkcolor={red!80!black},
    citecolor={blue!80!black},
    urlcolor={blue!80!black}
}

\renewcommand{\mod}[1]{\pmod{#1}}
\newtheorem{theorem}{Theorem}[section]
\newtheorem{corollary}[theorem]{Corollary}

\newtheorem{lemma}[theorem]{Lemma}

\theoremstyle{definition}

\def \P {\mathcal{P}}
\def \G {\mathcal{G}}
\def \C {\mathcal{C}}

\def \F {\mathcal{F}}

\def \leq {\leqslant}
\def \geq {\geqslant}
\def \ua {^{\,\uparrow}}
\def \da {^{\,\downarrow}}
\def \ud {^{\,\updownarrow}}

\let\oldproofname=\proofname
\renewcommand{\proofname}{\rm\bf{\oldproofname}}

\title{Minimising the total number of subsets and supersets}
\author{Adam Gowty \hspace{4mm} Daniel Horsley \hspace{4mm} Adam Mammoliti \\ School of Mathematics, Monash University, Victoria 3800, Australia}
\date{}

\begin{document}
\setstretch{1.2}
\maketitle

\begin{abstract}
Let $\F$ be a family of subsets of a ground set $\{1,\ldots,n\}$ with $|\F|=m$, and let $\F\ud$ denote the family of all subsets of $\{1,\ldots,n\}$ that are subsets or supersets of sets in $\F$. Here we determine the minimum value that $|\F\ud|$ can attain as a function of $n$ and $m$. This can be thought of as a `two-sided' Kruskal-Katona style result. It also gives a solution to the isoperimetric problem on the graph whose vertices are the subsets of $\{1,\ldots,n\}$ and in which two vertices are adjacent if one is a subset of the other. This graph is a supergraph of the $n$-dimensional hypercube and we note some similarities between our results and Harper's theorem, which solves the isoperimetric problem for hypercubes. In particular, analogously to Harper's theorem, we show there is a total ordering of the subsets of $\{1,\ldots,n\}$ such that, for each initial segment $\F$ of this ordering,  $\F\ud$ has the minimum possible size. Our results also answer a question that arises naturally out of work of Gerbner et al.\ on cross-Sperner families and allow us to strengthen one of their main results.
\end{abstract}

\section{Introduction}

For a set $X$, let $2^X$ denote the power set of $X$. For a nonnegative integer $n$, let $[n]=\{1,\ldots,n\}$ and note in particular that $[n]=\emptyset$ when $n=0$. Let $\F$ be a family of subsets of a ground set $X$. We define $\F\da=\{S \subseteq X: S \subseteq F \text{ for some } F \in \F\}$, $\F\ua=\{S \subseteq X: F \subseteq S \text{ for some } F \in \F\}$ and $\F\ud= \F\ua \cup \F\da$. Note that the latter two definitions depend implicitly on the ground set of $\F$ and we will clarify what ground set we are considering whenever it may be unclear. We say $\F$ is \emph{convex} if $\F\ua \cap \F\da=\F$.

For nonnegative integers $m$ and $n$ such that $m \leq 2^n$, let
\[\Phi(n,m)=\min\left\{\left|\F\ud\right|: \F \subseteq 2^{[n]} \text{ and } |\F|=m\right\}.\]
Note that, for any nonnegative integer $n$ we clearly have $\Phi(n,0)=0$, $\Phi(n,2^n)=2^n$ and that $\Phi(n,m)$ is nondecreasing in $m$. This paper is devoted to the study of the values of $\Phi(n,m)$ and of families $\F$ of subsets of $[n]$ for which $|\F\ud|=\Phi(n,m)$. We first state our main results and then discuss their motivation and context. Our first main result determines $\Phi(n,m)$, for all nonnegative integers $m$ and $n$ such that $m \leq 2^n$, by means of a recursion.

\begin{theorem}\label{T:isoperRecursive}
We have $(\Phi(0,0),\Phi(0,1))=(0,1)$ and $(\Phi(1,0),\Phi(1,1),\Phi(1,2))=(0,2,2)$. For integers $n \geq 2$ and $m \in \{0,\ldots,2^n\}$,
\[
\Phi(n,m) =
\left\{
  \begin{array}{ll}
    2\Phi(n-2,m)+m\quad & \hbox{if $m \in \{0,\ldots,2^{n-2}\}$} \\
    2^n-s & \hbox{if $m \in \{2^{n-2}+1,\ldots,2^n\}$} \\
  \end{array}
\right.\]
where $s$ is the greatest element of $\{0,\ldots,2^{n-2}\}$ such that $\Phi(n,s) \leq 2^n-m$.
\end{theorem}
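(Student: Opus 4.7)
The plan is to induct on $n$, with bases $n\in\{0,1\}$ verified by direct enumeration. Fix $n\geq 2$ and assume the result for $n-2$. Within this inductive step I would first establish the formula for $m\leq 2^{n-2}$, and then derive the formula for $m>2^{n-2}$ from a complementation duality.

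For the upper bound $\Phi(n,m)\leq 2\Phi(n-2,m)+m$ in the first case, I would fix an optimal convex family $\F_0\subseteq 2^{[n-2]}$ of size $m$ (a convex optimiser is carried along the induction, since the constructions proposed here visibly produce convex families) and set $\F=\{F\cup\{n-1\}:F\in\F_0\}\subseteq 2^{[n]}$. Partitioning $2^{[n]}$ into the four quadrants determined by $\{n-1,n\}$ yields $|\F\da|=2|\F_0\da|$, $|\F\ua|=2|\F_0\ua|$ and $|\F\da\cap\F\ua|=|\F_0\da\cap\F_0\ua|=m$ (the last equality by convexity of $\F_0$), so $|\F\ud|=|\F\da|+|\F\ua|-|\F\da\cap\F\ua|=2\Phi(n-2,m)+m$.

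For the second case I would use the following duality. For any $\F$ with $|\F|=m$, set $\G=2^{[n]}\setminus\F\ud$; every $G\in\G$ is incomparable to every $F\in\F$, so symmetrically $\F\cap\G\ud=\emptyset$ and $|\G\ud|\leq 2^n-m$. Applied to an optimal $\F$, the value $s=2^n-\Phi(n,m)$ satisfies $\Phi(n,s)\leq 2^n-m$, giving $\Phi(n,m)\geq 2^n-s_{\max}$ where $s_{\max}$ is the largest such $s$. The reverse inequality follows by fixing an optimal $\G$ of size $s_{\max}$ and choosing any $\F\subseteq 2^{[n]}\setminus\G\ud$ with $|\F|=m$. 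To match the range $s\in\{0,\ldots,2^{n-2}\}$ in the statement, the first case gives $\Phi(n,2^{n-2})=3\cdot 2^{n-2}>2^n-m$ whenever $m>2^{n-2}$, so monotonicity of $\Phi(n,\cdot)$ forces $s_{\max}<2^{n-2}$.

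The main obstacle is the matching lower bound $\Phi(n,m)\geq 2\Phi(n-2,m)+m$ in the first case. My plan here is to partition $\F$ according to its intersection with $\{n-1,n\}$ into $\F_{00},\F_{01},\F_{10},\F_{11}$ with projections $\F'_{ab}\subseteq 2^{[n-2]}$, and express the contribution of each of the four quadrants to $\F\ud$ as a union of shadows and shades of the $\F'_{ab}$. A quadrant-by-quadrant application of the inductive hypothesis is easily seen to be slack when $\F$ is unevenly distributed, so I would first perform a compression/shifting step --- in the spirit of the shifts behind Harper's theorem --- that migrates $\F$ into a single quadrant (say $(1,0)$) without increasing $|\F\ud|$, after which the inductive bound applied to the projection finishes the argument. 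The delicate step will be to design an operation that monotonically controls the mixed quantity $|\F\ud|=|\F\da|+|\F\ua|-|\F\da\cap\F\ua|$ rather than $|\F\da|$ or $|\F\ua|$ alone; I expect this will require a paired exchange that simultaneously accounts for subsets and supersets.
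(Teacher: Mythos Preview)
Your upper-bound construction and your duality argument for the second case are correct and coincide with the paper's approach; the reduction of the range of $s$ to $\{0,\ldots,2^{n-2}\}$ via $\Phi(n,2^{n-2})=3\cdot 2^{n-2}$ is also exactly what the paper does.

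The gap you flag in the lower bound is real, and Harper-style shifts will not close it as stated. An elementary shift $S_{i,j}$ with $i<j$ removes $j$ and inserts $i$; it can add $n-1$ to a set only by removing $n$, so every set in your quadrant $(0,0)$ (containing neither $n-1$ nor $n$) is fixed by all such shifts. Hence you cannot migrate an arbitrary $\F$ into the quadrant $(1,0)$ by cardinality-preserving compressions. The paper's remedy is to allow \emph{size-changing} shifts $S_{I,J}$ with $\max(I)<\min(J)$ but no constraint relating $|I|$ and $|J|$: for instance, if $1\notin F\neq\emptyset$ then $S_{\{1\},F}$ sends $F$ to $\{1\}$, and if $n\in F\neq[n]$ then $S_{[n-1]\setminus F,\{n\}}$ sends $F$ to $[n-1]$. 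One shows that such a generalised shift does not increase $|\F\da|$ or $|\F\ua|$ separately (provided all shifts with strictly smaller $I$ or $J$ already fix $\F$), and hence, via $|\F\ua\cap\F\da|\geq m$, does not increase $|\F\ud|$ for a convex witness.

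Even with these shifts the paper does not argue by pure migration. It proves a dichotomy instead: if a strongly shifted $(n,m)$-family with $m\leq 2^{n-2}$ is \emph{not} contained in $\C=\{A:\{1\}\subseteq A\subseteq[n-1]\}$, then necessarily $|\F\ud|\geq 2^{n-1}+m$ (the strong shifting forces $\{1\}\in\F$ or $[n-1]\in\F$, after which one exhibits $2^{n-1}+m$ distinct sets in $\F\ud$ directly). Since an explicit convex family inside $\C$ already achieves $|\F\ud|=2^{n-1}+m$, one concludes that in either branch some witness lies in $\C$, and the inductive step then applies to its projection to $2^{[n-2]}$. Your ``paired exchange'' instinct is pointed in the right direction, but the concrete mechanism needed is this generalised shifting together with the dichotomy, not a direct compression into a single quadrant.
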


The determination of $\Phi(n,m)$ given by Theorem~\ref{T:isoperRecursive} has drawbacks: it does not immediately suggest any neat approximation for $\Phi(n,m)$ and, naively, it seems that computing $\Phi(n,m)$ for a given $n$ and $m$ may require computing $\Theta(2^n)$ other values of $\Phi$. We are able to remedy both these problems by giving a closed form approximation for $\Phi(n,m)$ and an efficient means for computing $\Phi(n,m)$.

\begin{corollary}\label{C:isoperBounds}
Let $n$ and $m$ be nonnegative integers with $m \leq 2^n$. Then
\[\Phi(n,m) \geq f(n,m) \qquad \text{ where } \qquad f(n,m)=\sqrt{2^{n+2}m}-m\]
and we have equality whenever $f(n,m)$ is an integer. Furthermore $\Phi(n,m) \leq f(n,m)+\sqrt{2^n}$ and hence $\Phi(n,m) \sim f(n,m)$ as $n \rightarrow \infty$ with $m=\omega(1)$.
\end{corollary}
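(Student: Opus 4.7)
The plan is to establish each part by induction on $n$ using the recursion in Theorem~\ref{T:isoperRecursive}. The key algebraic identity is $f(n,m) = 2f(n-2,m) + m$, which exactly mirrors the Case~1 recursion for $\Phi$. Consequently, for $m \leq 2^{n-2}$ each desired inequality transfers immediately from $n-2$ to $n$: combining $\Phi(n,m) = 2\Phi(n-2,m) + m$ with the inductive hypothesis $\Phi(n-2,m) \geq f(n-2,m)$ gives the lower bound at $n$, and combining with $\Phi(n-2,m) \leq f(n-2,m) + \sqrt{2^{n-2}}$ gives the upper bound (since $2\sqrt{2^{n-2}} = \sqrt{2^n}$). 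So essentially all the work lies in Case~2, where $m > 2^{n-2}$.

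For the lower bound in Case~2, write $\Phi(n,m) = 2^n - s$ with $s \leq 2^{n-2}$ and $\Phi(n,s) \leq 2^n - m$. The Case~1 lower bound already gives $f(n,s) \leq 2^n - m$; rearranging as $2\sqrt{2^n s} \leq 2^n - m + s$ and squaring yields $(2^n + m - s)^2 \geq 2^{n+2} m$, which is equivalent to $2^n - s \geq f(n,m)$. For the equality claim, one notes that $f(n,m)$ is an integer iff $f(n-2,m)$ is (both are equivalent to $2^{n+2}m$ being a perfect square for $n \geq 2$), so induction gives Case~1 equality at $n$. For Case~2, the candidate $s_0 := (\sqrt{2^n} - \sqrt m)^2$ satisfies $f(n,s_0) = 2^n - m$, lies in $\{0,\ldots,2^{n-2}\}$ (using $m > 2^{n-2}$), and is an integer whenever $f(n,m)$ is; Case~1 equality then gives $\Phi(n,s_0) = f(n,s_0) = 2^n - m$, so $s_0$ is a valid candidate in the recursion, and strict monotonicity of $f$ rules out $s > s_0$, giving $s = s_0$ and $\Phi(n,m) = f(n,m)$.

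The main obstacle is the upper bound in Case~2, which requires a lower bound on $s$. The edge subcases $s_0 \leq \sqrt{2^n}$ and $s = 2^{n-2}$ are handled by direct checks (in the first, $s \geq 0 \geq s_0 - \sqrt{2^n}$ is automatic; in the second, a short estimate yields $3 \cdot 2^{n-2} \leq f(n,m) + \sqrt{2^n}$). Otherwise, by maximality of $s$ we have $\Phi(n,s+1) > 2^n - m$, and since both sides are integers this sharpens to $\Phi(n,s+1) \geq 2^n - m + 1$. The inductive upper bound at $s+1 \leq 2^{n-2}$ then yields $f(n,s+1) \geq 2^n - m + 1 - \sqrt{2^n}$. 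Letting $g = f^{-1}$ on $[0,2^n]$ and $t = 2^n - m$, convexity of $g$ together with $g'(t) = 1/f'(s_0) \leq 1$ (because $s_0 \leq 2^{n-2}$ forces $f'(s_0) = \sqrt{2^n/s_0} - 1 \geq 1$) gives $g(t) - g(t + 1 - \sqrt{2^n}) \leq (\sqrt{2^n} - 1) g'(t) \leq \sqrt{2^n} - 1$, so $s + 1 \geq g(t + 1 - \sqrt{2^n}) \geq s_0 - \sqrt{2^n} + 1$ and therefore $\Phi(n,m) = 2^n - s \leq f(n,m) + \sqrt{2^n}$. The asymptotic $\Phi(n,m) \sim f(n,m)$ as $n \to \infty$ with $m = \omega(1)$ then follows because $f(n,m)/\sqrt{2^n} = 2\sqrt{m} - m/\sqrt{2^n} \geq \sqrt{m} \to \infty$. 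The delicate point is the integer-sharpening $\Phi(n,s+1) > 2^n-m \Rightarrow \Phi(n,s+1) \geq 2^n-m+1$, without which the Case~2 argument would lose an additive $1$ and yield only $\Phi(n,m) \leq f(n,m) + \sqrt{2^n} + 1$.
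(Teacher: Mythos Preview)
Your proof is correct and follows essentially the same inductive route as the paper: Case~1 transfers via the identity $f(n,m)=2f(n-2,m)+m$, and in Case~2 you work with the same candidate $s_0=(\sqrt{2^n}-\sqrt{m})^2$ for both the lower bound and the equality claim. The only cosmetic difference is in the Case~2 upper bound, where the paper shows $s\geq s_0-\sqrt{2^n}$ by directly verifying $\Phi(n,\lceil s_0-\sqrt{2^n}\,\rceil)\leq 2^n-m$ via the explicit inequality $(s_0-\sqrt{2^n}+1)^{1/2}<\sqrt{s_0}-1$ (equivalent to $s_0<2^{n-2}$), whereas you argue from the other end using convexity of $f^{-1}$ and $g'(t)=1/f'(s_0)\leq 1$; since $f'(s_0)\geq 1$ is again just $s_0\leq 2^{n-2}$, the two computations are dual and neither buys anything the other lacks. (Incidentally, your edge case $s=2^{n-2}$ cannot occur when $m>2^{n-2}$, since $\Phi(n,2^{n-2})=3\cdot 2^{n-2}>2^n-m$, so that check is redundant but harmless.)
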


\begin{theorem}\label{T:isoperQuick}
Let $n \geq 0$ be an integer and let $m \in \{1,\ldots,2^n\}$, let $\kappa=1$ if $n$ is even and $\kappa=2$ if $n$ is odd, and let $c$ be the positive integer such that $\kappa c(c-1) \leq m < \kappa c(c+1)$. Then
\[\Phi(n,m)=\sqrt{\kappa 2^{n}}\Bigl(2c-1+2\delta_{2 \kappa c}\bigl(m-\kappa c(c-1)\bigr)\Bigr)-m\]
where, for any positive integer $k$, $\delta_k: \{0,\ldots,k\} \rightarrow \{y \in \mathbb{Q}:0\leq y \leq 1\}$ is a function recursively defined by $\delta_1(x)=x$ and, for $k \geq 2$,
\begin{equation}\label{E:deltaDef}
\delta_k(x) =
\begin{cases}
\frac{1}{2}\delta_{\lfloor k/2\rfloor}(x) & \text{if $x \in \{0,\ldots,\lfloor\frac{k}{2}\rfloor-1\}$} \\
\frac{1}{2}+\frac{1}{2}\delta_{\lceil k/2 \rceil}(x-\lfloor\frac{k}{2}\rfloor)  &\text{if $x \in \{\lfloor\frac{k}{2}\rfloor,\ldots,k\}$.}
\end{cases}
\end{equation}
\end{theorem}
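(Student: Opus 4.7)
The plan is to prove Theorem~\ref{T:isoperQuick} by strong induction on $n$, using Theorem~\ref{T:isoperRecursive} as the driver. The base cases $n \in \{0,1\}$ amount to finitely many values of $m$ and are checked by hand. Fix $n \geq 2$ and assume the formula holds at all smaller indices; set $N = \sqrt{\kappa 2^n}$, which is an integer power of $2$, and write $\hat\Phi(n,m)$ for the claimed closed-form right-hand side.

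The first case of Theorem~\ref{T:isoperRecursive} is essentially free: since $n$ and $n-2$ have the same parity, $\kappa$ and $c$ are unchanged, and $\sqrt{\kappa 2^{n-2}} = N/2$, so the formula directly yields $\hat\Phi(n,m) = 2\hat\Phi(n-2,m) + m$. Together with the inductive hypothesis this matches the recursion $\Phi(n,m) = 2\Phi(n-2,m) + m$, giving $\hat\Phi(n,m) = \Phi(n,m)$ for $m \in \{1,\ldots,2^{n-2}\}$.

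The work lies in the range $m \in \{2^{n-2}+1,\ldots,2^n\}$. Put $s := 2^n - \hat\Phi(n,m)$; by the second case of Theorem~\ref{T:isoperRecursive} together with the first case at step $n$, it suffices to show that $s \in \{0,\ldots,2^{n-2}\}$, that $\hat\Phi(n,s) \leq 2^n - m$ (with the convention $\hat\Phi(n,0) := 0$), and that $\hat\Phi(n,s+1) > 2^n - m$ whenever $s+1 \leq 2^{n-2}$. Using $N^2 = \kappa 2^n$ and setting $c^* := N/\kappa - c$, a short rearrangement of the formula gives
\[s = \kappa c^*(c^*+1) + x - 2N\delta_{2\kappa c}(x), \qquad x := m - \kappa c(c-1).\]
Direct evaluation shows $\hat\Phi(n,2^{n-2}) = 3 \cdot 2^{n-2}$, and within each bracket the successive differences $2N\bigl(\delta_{2\kappa c}(x+1)-\delta_{2\kappa c}(x)\bigr) - 1$ are nonnegative (using that $N$ is a power of $2$ and $2\kappa c \leq 2N$), so $\hat\Phi(n,\cdot)$ is nondecreasing; together these pin down the range of $s$.

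For the remaining two inequalities, read off from the explicit form of $s$ the formula's parameters $(c',x')$ at $s$: generically $c' = c^*$ and $x' = 2\kappa c^* + x - 2N\delta_{2\kappa c}(x)$, while the boundary case $x=0$ gives $(c',x') = (c^*+1,0)$. Substituting into $\hat\Phi(n,s)$ and simplifying, the required inequalities collapse to
\[\delta_{2\kappa c^*}(x') + \delta_{2\kappa c}(x) \leq 1 \qquad\text{and}\qquad \delta_{2\kappa c^*}(x'+1) + \delta_{2\kappa c}(x) > 1.\]
Boundary situations (such as $x=0$, $x'+1 = 2\kappa c^*$, or $s=0$) each reduce to a one-line verification. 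The main obstacle is therefore to establish these two $\delta$-inequalities in general; the plan is to do so by an auxiliary induction on $k+\ell = 2\kappa(c+c^*) = 2N$, using the two pieces of the recursive definition of $\delta$. The fact that $N$ is a power of $2$ is essential: it ensures that as we descend through the recursion, the halvings on the $k$- and $\ell$-sides remain compatible.
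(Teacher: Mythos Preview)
Your plan is correct and is essentially the paper's own approach. The paper also inducts on $n$ via Theorem~\ref{T:isoperRecursive}, with the $m\leq 2^{n-2}$ case immediate; for larger $m$ it isolates exactly the same ``conjugate'' parameter $d=c^*=N/\kappa-c$ and reduces everything to a single combinatorial fact about the $\delta$ functions (Lemma~\ref{L:deltaRecursive}): for $k+q=2^t$ and $\ell\in\{0,\ldots,k\}$, the greatest $r$ with $2^t\delta_q(r)-r\leq k-\ell$ is $q-2^t\delta_k(\ell)+\ell$. With $k=2\kappa c$, $q=2\kappa c^*$, $2^t=2N$ and $\ell=x$, this is precisely your pair of inequalities $\delta_{2\kappa c^*}(x')+\delta_{2\kappa c}(x)\leq 1$ and $\delta_{2\kappa c^*}(x'+1)+\delta_{2\kappa c}(x)>1$ at $x'=2\kappa c^*+x-2N\delta_{2\kappa c}(x)$, and the paper proves it by the same descent through \eqref{E:deltaDef} that you propose. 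The only organisational differences are that the paper separates the $s=0$ situation as its own Case~3 rather than treating it as a boundary, and that it records the needed monotonicity and endpoint facts about $\delta_k$ in a preliminary Lemma~\ref{L:deltaProps} before embarking on the main argument.
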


Observe that evaluating $\delta_k(x)$ involves at most $\log_2(k)$ recursive steps and hence Theorem~\ref{T:isoperQuick} allows $\Phi(n,m)$ to be computed in $O(n)$ time. Values of $\Phi(n,m)$ for small $n$ are given in Table~\ref{Tab:smallVals} and an illustration of these values is given in Figure~\ref{F:Ferrers}. Our last main result says that there is a total ordering of the subsets of $[n]$ such that, for each $m \in \{0,\ldots,2^n\}$, $|(\F_m)\ud|=\Phi(n,m)$, where $\F_m$ is the initial segment containing the first $m$ sets under this ordering. Furthermore we show that certain special families arise as initial segments of this ordering.

\begin{theorem}\label{T:totalOrdering}
Let $n$ be a nonnegative integer. There is a chain $\F_0 \subsetneq \cdots \subsetneq \F_{2^n}$ of convex families of subsets of $[n]$ such that $|\F_m|=m$ and $|(\F_m)\ud|=\Phi(n,m)$ for each $m \in \{0,\ldots,2^{n}\}$. Furthermore, this chain includes the families $\C_{n,a}$ and $\C^*_{n,a}$ for each $a \in \{0,\ldots,n-2\}$ with $a \equiv n \mod{2}$, where
\begin{align*}
\mathcal{C}_{n,a} & = \{A \subseteq [n]:[\tfrac{n-a}{2}] \subseteq A \subseteq [\tfrac{n+a}{2}]\} \\
\mathcal{C}^*_{n,a} &= \{A \subseteq [n]: A \nsubseteq \{\tfrac{n-a}{2}+1,\ldots,n\} \textup{ and } \{\tfrac{n+a}{2}+1,\ldots,n\} \nsubseteq A\}.
\end{align*}
\end{theorem}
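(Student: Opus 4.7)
The plan is to proceed by strong induction on $n$, with easy base cases at $n \in \{0,1\}$, and to build the chain for $n \geq 2$ in two phases that mirror the two branches of the recursion in Theorem~\ref{T:isoperRecursive}. By induction assume a chain $\F'_0 \subsetneq \cdots \subsetneq \F'_{2^{n-2}}$ of convex families in $2^{[n-2]}$ satisfying the theorem for $n-2$. In the first phase, for $m \in \{0,\ldots,2^{n-2}\}$ I would set $\F_m = \{\{1\} \cup (F+1) : F \in \F'_m\}$, where $F+1 = \{i+1 : i \in F\}$ shifts $[n-2]$ to $\{2,\ldots,n-1\}$, so every element of $\F_m$ contains $1$ but not $n$. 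Partitioning $2^{[n]}$ into four cells by membership of $1$ and $n$, one sees that $\F_m\da$ lies in the two cells excluding $n$ with size $2|(\F'_m)\da|$, that $\F_m\ua$ lies in the two cells containing $1$ with size $2|(\F'_m)\ua|$, and that their intersection equals $\F_m$ by convexity of $\F'_m$; inclusion-exclusion then gives $|\F_m\ud| = 2|(\F'_m)\ud| + m = 2\Phi(n-2,m)+m = \Phi(n,m)$. Convexity, strict nesting of the $\F_m$, and the identity $\F_{2^a} = \C_{n,a}$ for each valid $a$ all follow from the inductive hypothesis together with the explicit definition of $\C_{n,a}$; at the end of this phase, $\F_{2^{n-2}} = \{A \subseteq [n] : 1 \in A, n \notin A\} = \C_{n,n-2}$.

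In the second phase, let $\tau \colon 2^{[n]} \to 2^{[n]}$ denote the complementation $\tau(A) = [n] \setminus A$ and, for each $s \in \{0,\ldots,2^{n-2}\}$, define $H_s = 2^{[n]} \setminus \tau(\F_s)\ud$. Routine verifications show that each $H_s$ is convex, $|H_s| = 2^n - \Phi(n,s)$, the $H_s$ form a decreasing chain as $s$ increases, and $H_{2^{n-2}}$ coincides with $\F_{2^{n-2}}$ from the first phase. For each $m \in \{2^{n-2}+1,\ldots,2^n\}$, the parameter $s = s(m)$ from Theorem~\ref{T:isoperRecursive} places $m$ in the range $\{|H_{s+1}|+1,\ldots,|H_s|\}$, and I would choose $\F_m$ with $H_{s+1} \subsetneq \F_m \subseteq H_s$ and $|\F_m| = m$. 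Disjointness $\F_m \cap \tau(\F_s)\ud = \emptyset$ forces $|\F_m\ud| \leq 2^n - s = \Phi(n,m)$, and the reverse inequality is the definition of $\Phi$. A direct computation using the minimum and maximum elements of the subcube $\tau(\C_{n,a})$ shows $2^{[n]} \setminus \tau(\C_{n,a})\ud = \C^*_{n,a}$, so $H_{2^a} = \C^*_{n,a}$ and this family appears in the chain at $m = |\C^*_{n,a}|$ as required.

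The principal obstacle is choosing the $\F_m$ inside each range $\{|H_{s+1}|+1,\ldots,|H_s|\}$ so as to form a convex chain adding one element at a time. I would handle this with a shelling lemma: whenever $H \subsetneq K$ are convex subfamilies of $2^{[n]}$, there exists a chain $H = \G_0 \subsetneq \G_1 \subsetneq \cdots \subsetneq \G_{|K|-|H|} = K$ of convex families each obtained from the previous by adding a single set. By induction on $|K|-|H|$, this reduces to producing $X \in K \setminus H$ such that $K \setminus \{X\}$ is convex, which is equivalent to $X$ being a maximal or minimal element of $K$ under inclusion. If every maximal and every minimal element of $K$ lay in $H$, then any $X \in K \setminus H$ would be strictly sandwiched between some minimal and some maximal element of $K$, both of which lie in $H$, and convexity of $H$ would force $X \in H$, a contradiction. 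Applying the shelling lemma to each consecutive pair $(H_{s+1}, H_s)$ then completes the second phase and the inductive step.
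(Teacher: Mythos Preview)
Your proof is correct and follows essentially the same two-phase inductive strategy as the paper: the sandwich construction of Lemma~\ref{L:sandwich} for $m\leq 2^{n-2}$, followed by a conjugation-plus-interpolation for larger $m$, with your ``shelling lemma'' being exactly the paper's Lemma~\ref{L:interpolate}. The only difference is cosmetic: you conjugate via complementation $\tau(A)=[n]\setminus A$ whereas the paper uses the reversal $\rho(x)=n+1-x$, but since $\tau(\C_{n,a})=\rho(\C_{n,a})$ as families, both routes produce the same $\C^*_{n,a}$ and the arguments are interchangeable.
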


The investigation undertaken here is motivated in at least three different ways. The first of these relates to the Kruskal-Katona theorem \cite{Kat,Kru} (see also \cite[\S5]{Bol}). The \emph{lower shadow} of a family $\F$ of $k$-subsets of $[n]$ is defined to be the family of all $(k-1)$-subsets of $[n]$ that are a subset of at least one set in $\F$ and the \emph{upper shadow} of $\F$ is the family of all $(k+1)$-subsets of $[n]$ that are a superset of at least one set in $\F$. The Kruskal-Katona theorem states that, over all families $\F$ of $k$-subsets of $[n]$ with $|\F|=m$, the size of the lower shadow of $\F$ is minimised when $\F$ is taken to be the first $m$ $k$-subsets of $[n]$ in colexicographic order. It is known that this same choice of $\F$ also minimises the size of $\F\da$. Thus our results can be seen as proving a certain `two-sided' variant of the Kruskal-Katona theorem in which $\F$ is allowed to contain sets of different sizes and we are interested in minimising the size of $\F\ud$. Through this lens, Theorem~\ref{T:isoperQuick} can be seen as analogous to the original Kruskal-Katona theorem, which gives an exact answer that can be difficult to work with, and Corollary~\ref{C:isoperBounds} as akin to the neat approximation due to Lov\'{a}sz \cite[p. 95]{Lov}. Bashov \cite{Bas1,Bas2} studied a different two-sided version of the Kruskal-Katona theorem in which $\F$ remained restricted to contain sets of uniform size and the size of the union of the upper and lower shadow (rather than the union of $\F\ua$ and $\F\da$) was to be minimised. Interestingly, he found that no minimising total order of subsets of $[n]$ existed for his problem, contrasting with both the Kruskal-Katona theorem and Theorem~\ref{T:totalOrdering}.

A second motivation concerns vertex-isoperimetric problems for graphs. The vertex boundary of a set of vertices $U$ of a graph $G$ is the set of all vertices of $G$ not in $U$ that are adjacent to at least one vertex in $U$. The vertex-isoperimetric problem on $G$ asks us to determine the minimum size of the vertex boundary of $U$ over all sets $U$ of a given number of vertices. In particular, much attention has been paid to the vertex-isoperimetric problem on the $n$-dimensional hypercube, which can be defined as the graph with vertex set $2^{[n]}$ and edge set $\{AB: A \subsetneq B, |B|=|A|+1\}$. The classical result solving the problem is due to Harper \cite{Har} (see also \cite[\S16]{Bol}) but related problems have been a subject of recent study \cite{KeeLon,PrzRob,Rat}. Harper demonstrated that any initial segment of the so-called simplicial ordering of vertices of the hypercube has minimum boundary size over all subsets of the vertex set with the same size. Furthermore, the $d$-Hamming ball $\{A \subseteq [n]: |A| \leq d\}$ arises as an initial segment of the simplicial ordering for each $d \in \{1,\ldots,n-1\}$. Our results can be seen as a solution to the vertex isoperimetric problem on $\Gamma_n$, the supergraph of the $n$-dimensional hypercube with vertex set $2^{[n]}$ and edge set $\{AB: A \subsetneq B\}$. In this graph, the boundary of a set $\F$ of vertices is $\F\ud \setminus \F$ and hence the minimum boundary size over all sets of $m$ vertices is $\Phi(n,m)-m$. The ordering of the vertices of $\Gamma_n$ given by Theorem~\ref{T:totalOrdering} and the families $\C_{n,a}$ and $\C^*_{n,a}$ can be seen as analogous to the simplicial ordering of vertices of the hypercube and the Hamming balls.

Finally, our results relate to work by Gerbner et al.\ in \cite{GerLemPalPatSze} on pairs of cross-Sperner families. A pair of families of sets is cross-Sperner if no set in one family is a subset of a set in the other. Our work answers a question that arises naturally out of theirs and also allows us to strengthen one of the main results \cite[Theorem 1.1]{GerLemPalPatSze} from their paper (see Theorem~\ref{T:ImproveGerbnerEtAl}). We return to this in Section~\ref{S:CS}.

Very shortly after our paper appeared as a preprint, Behague, Kuperus, Morrison and Wright \cite{BehKupMorWri} posted a preprint containing independent work on closely related questions. They give a very neat proof of the lower bound for $\Phi(n,m)$ given in Corollary~\ref{C:isoperBounds}, and then use this to improve on results from \cite{GerLemPalPatSze} by establishing new upper and lower bounds on the sizes of collections of families that are pairwise cross-Sperner. As we do, they also strengthen \cite[Theorem 1.1]{GerLemPalPatSze} by proving a version of our Theorem~\ref{T:ImproveGerbnerEtAl}. They do not obtain results analogous to our Theorems~\ref{T:isoperRecursive}, \ref{T:isoperQuick} and \ref{T:totalOrdering}, however.

The rest of the paper is organised as follows. In Section~\ref{S:prelim} we establish some concepts, notation and preliminary results that will form the foundation for the proofs of our main results. Section~\ref{S:shift} is devoted to introducing a generalisation of the classical concept of shifting and showing that $|\G\ud| \leq |\F\ud|$ when $\G$ is a family obtained from another family $\F$ by this generalised shifting. In Section~\ref{S:mainProof} we combine results from Sections~\ref{S:prelim} and \ref{S:shift} to complete our proof of Theorem~\ref{T:isoperRecursive} along with Corollary~\ref{C:isoperBounds}. We show in Section~\ref{S:quickProof} that, with some extra work, Theorem~\ref{T:isoperQuick} follows from Theorem~\ref{T:isoperRecursive}. In Section~\ref{S:orderProof} we prove Theorem~\ref{T:totalOrdering} and in Section~\ref{S:CS} we discuss the consequences of our results for pairs of cross-Sperner families. Some brief conclusive comments are given in Section~\ref{S:conc}.

\section{Preliminaries}\label{S:prelim}

In this section we lay the groundwork for our proof of Theorem~\ref{T:isoperRecursive}. For nonnegative integers $n$ and $m$ we call a family of subsets $\F$ of $[n]$ with $|\F|=m$ an \emph{$(n,m)$-family}. If an $(n,m)$-family $\F$ has the property that $|\F\ud|=\Phi(n,m)$, then we call it an \emph{$(n,m)$-witness family}.

Our first step is to show that, for a fixed value of $n$, the values of $\Phi(n,m)$ are related to one another according to a certain self-conjugacy property. More precisely, if the sequence $\Phi(n,1),\Phi(n,2),\ldots,\Phi(n,2^n)$ is viewed as an integer partition, then this partition is self conjugate. See Figure~\ref{F:Ferrers} for an illustration of this and see \cite[Chapter 3]{AndEri} for the relevant definitions related to integer partitions, which will not be required here. This self-conjugacy is the basic idea underlying the second case of the recursion in Theorem~\ref{T:isoperRecursive}.

\begin{lemma}\label{L:selfConjugate}
Let $m$ and $n$ be nonnegative integers such that $m \leq 2^n$. Then $\Phi(n,m) = 2^n-s$ where $s$ is the greatest element of $\{0,\ldots,2^{n}\}$ such that $\Phi(n,s) \leq 2^n-m$.
\end{lemma}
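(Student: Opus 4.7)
The plan rests on one simple symmetric observation: for any families $\F_1,\F_2\subseteq 2^{[n]}$, if no set in $\F_1$ is comparable (in the subset order) to any set in $\F_2$, then $\F_1\ud\cap\F_2=\emptyset$ and, by symmetry of the role played by comparability, also $\F_1\cap\F_2\ud=\emptyset$. This is immediate from the definition of $\ud$: a witness that $G\in\F_2\ud$ comparable to some $F\in\F_1$ would also witness $F\in\F_1\ud$ comparable to $G$.

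First I would prove the inequality $\Phi(n,s)\leq 2^n-m$ for $s:=2^n-\Phi(n,m)$. Take an $(n,m)$-witness family $\F$ and set $\G=2^{[n]}\setminus\F\ud$, so $|\G|=2^n-\Phi(n,m)=s$. By construction every member of $\G$ is incomparable to every member of $\F$ (if $G\in\G$ were comparable to some $F\in\F$ then $G\in\F\ud$, contradicting $G\notin\F\ud$). The symmetric observation then gives $\G\ud\cap\F=\emptyset$, so $|\G\ud|\leq 2^n-m$, and hence $\Phi(n,s)\leq|\G\ud|\leq 2^n-m$.

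For the maximality of $s$, I would argue by contradiction. Suppose $s'\in\{0,\ldots,2^n\}$ satisfies $s'>s$ and $\Phi(n,s')\leq 2^n-m$. Let $\G'$ be an $(n,s')$-witness family and set $\F'=2^{[n]}\setminus\G'\ud$. Then
\[
|\F'|=2^n-\Phi(n,s')\geq 2^n-(2^n-m)=m,
\]
so we may select $\F''\subseteq\F'$ with $|\F''|=m$. By the symmetric observation again (with the roles of $\F$ and $\G$ reversed), $\F'\ud\cap\G'=\emptyset$, so $|\F'\ud|\leq 2^n-s'$. Since $\F''\ud\subseteq\F'\ud$, this yields $|\F''\ud|\leq 2^n-s'<2^n-s=\Phi(n,m)$, contradicting the definition of $\Phi(n,m)$ as a minimum over $(n,m)$-families.

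The argument is essentially a self-duality statement obtained by complementing within $2^{[n]}$, and there is no serious obstacle: both directions use the same simple observation applied to $(\F,\G)$ and $(\G',\F')$ respectively. The only detail that needs care is confirming that the set of candidates for $s$ is non-empty (so that a greatest element exists), which is immediate since $s=0$ always satisfies $\Phi(n,0)=0\leq 2^n-m$.
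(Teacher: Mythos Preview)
Your proof is correct and follows essentially the same approach as the paper's: both rely on the symmetric observation that if $\G\subseteq 2^{[n]}\setminus\F\ud$ then $\G\ud\subseteq 2^{[n]}\setminus\F$, applied once in each direction. The only difference is organizational---you set $s:=2^n-\Phi(n,m)$ and verify it is the maximum of the candidate set, whereas the paper lets $s$ be that maximum and then establishes $\Phi(n,m)\leq 2^n-s$ and $\Phi(n,m)\geq 2^n-s$ separately---but the underlying constructions and the duality argument are identical.
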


\begin{proof}
Because $\Phi(n,s) \leq 2^n-m$, there is an $(n,s)$-family $\F$ such that $|\F\ud| \leq 2^n-m$. So we can choose a subfamily $\G$ of $2^{[n]} \setminus \F\ud$ with $|\G|=m$. Since no set in $\G$ is in $\F\ud$, no set in $\G\ud$ is in $\F$. So $\G\ud \subseteq 2^{[n]} \setminus \F$ and hence $|\G\ud| \leq 2^n-s$. Thus $\Phi(n,m) \leq 2^n-s$.

Now suppose for a contradiction that $\Phi(n,m) \leq 2^n-s-1$. Then there is an $(n,m)$-family $\mathcal{H}$ such that $|\mathcal{H}\ud| \leq 2^n-s-1$. So we can choose a subfamily $\mathcal{I}$ of $2^{[n]} \setminus \mathcal{H}\ud$ with $|\mathcal{I}|=s+1$. Then $\mathcal{I}\ud \subseteq 2^{[n]} \setminus \mathcal{H}$ and hence $|\mathcal{I}\ud| \leq 2^n-m$. This implies $\Phi(n,s+1) \leq 2^n-m$ in contradiction to the definition of $s$. Thus $\Phi(n,m) \geq 2^n-s$.
\end{proof}

Lemma~\ref{L:selfConjugate} goes most of the way toward establishing the second case of the recurrence in Theorem~\ref{T:isoperRecursive}, but it is not immediately apparent that the value of $s$ defined in Lemma~\ref{L:selfConjugate} will be in the set $\{0,\ldots,2^{n-2}\}$ for all $m \in \{2^{n-2}+1,\ldots,2^n\}$. This will definitely be the case, however, if $\Phi(n,2^{n-2})=3\cdot2^{n-2}$. For any integer $n \geq 2$ we have $\Phi(n-2,2^{n-2})=2^{n-2}$ and hence we will have $\Phi(n,2^{n-2})=3\cdot2^{n-2}$ if the first case of recurrence in Theorem~\ref{T:isoperRecursive} holds. So, given Lemma~\ref{L:selfConjugate}, it suffices to prove the first case to establish the totality of Theorem~\ref{T:isoperRecursive}.

For a family $\F$ of sets, we say a set $F \in \F$ is \emph{minimal in $\F$} if no set in $\F$ is a proper subset of $F$ and is \emph{maximal in $\F$} if no set in $\F$ is a proper superset of $F$. Recall that we say $\F$ is convex if $\F\ua \cap \F\da=\F$.  Equivalently, $\F$ is convex if, for any $F_1,F_2 \in \F$ and any set $A$ such that $F_1 \subseteq A \subseteq F_2$, we have $A \in \F$. Observe that removing a minimal or maximal set from a convex family of sets results in a new family that is still convex. Our next lemma shows that if we are attempting to find values of $\Phi(n,m)$, then it suffices to consider convex families.

\begin{lemma}\label{L:CSNoGaps}
Let $n$ and $m$ be nonnegative integers with $m \leq 2^n$. There exists an $(n,m)$-witness family that is convex.
\end{lemma}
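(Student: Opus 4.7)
The plan is to start from an arbitrary $(n,m)$-witness family $\F$ (which exists by the definition of $\Phi(n,m)$) and show how to ``convexify'' it without increasing the size of the $\ud$-closure or changing the cardinality. Then by the minimality of $\Phi(n,m)$, the resulting family will automatically be a witness family.

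Concretely, first I would form $\F' = \F\ua \cap \F\da$. A one-line check shows $\F'$ is convex: if $F_1,F_2 \in \F'$ and $F_1 \subseteq A \subseteq F_2$, then taking $F \in \F$ with $F \subseteq F_1$ witnesses $A \in \F\ua$, and taking $F'' \in \F$ with $F_2 \subseteq F''$ witnesses $A \in \F\da$. I would also observe that $\F \subseteq \F'$, so $|\F'| \geq m$, and that $(\F')\ua \subseteq \F\ua$ and $(\F')\da \subseteq \F\da$ (each superset of something in $\F\ua$ is itself in $\F\ua$, and similarly for $\F\da$), hence
\[
|(\F')\ud| \leq |\F\ud| = \Phi(n,m).
\]

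Next I would use the observation already made in the paper that removing a minimal (equivalently, a maximal) set from a convex family leaves a convex family. Since $\F'$ is finite, nonempty (assuming $m \geq 1$; the $m=0$ case is trivial with $\F = \emptyset$), and of size at least $m$, I can iteratively delete minimal elements from $\F'$ until exactly $m$ sets remain, producing a convex family $\G \subseteq \F'$ with $|\G| = m$. Because $\G \subseteq \F'$, monotonicity of the $\ud$-operation gives $|\G\ud| \leq |(\F')\ud| \leq \Phi(n,m)$. Combined with the lower bound $|\G\ud| \geq \Phi(n,m)$ from the definition of $\Phi$, we get $|\G\ud| = \Phi(n,m)$, so $\G$ is the desired convex $(n,m)$-witness family.

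There is essentially no hard step here; the only thing to verify carefully is that passing from $\F$ to $\F\ua \cap \F\da$ does not enlarge the $\ud$-closure, which is really the assertion $(\F\ua \cap \F\da)\ua \subseteq \F\ua$ and its dual, both of which are immediate from the definitions. Everything else is a standard pruning argument exploiting the fact that convexity is preserved under deletion of extremal elements.
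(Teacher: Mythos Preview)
Your proof is correct, and it takes a genuinely different (and arguably cleaner) route than the paper's.

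The paper's argument is iterative and size-preserving: it repeatedly finds a convexity violation $F_1 \subseteq A \subseteq F_2$ with $F_2$ maximal in $\F$, swaps $F_2$ out for $A$, and checks that this does not increase $|\F\ud|$ while strictly decreasing $|\F\ua \cap \F\da|$, so that the process terminates at a convex witness family of size exactly $m$ throughout.

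Your argument instead goes up then down: you pass in one step to the convex family $\F' = \F\ua \cap \F\da$ (the ``convex closure'' of $\F$), observe that this satisfies $(\F')\ud \subseteq \F\ud$ because $(\F\ua)\ua = \F\ua$ and $(\F\da)\da = \F\da$, and then prune extremal elements back down to size $m$. This avoids any termination argument and makes the key inequality $|(\F')\ud| \leq |\F\ud|$ entirely transparent; the trade-off is that you temporarily leave the class of size-$m$ families and need the separate (but trivial) monotonicity observation $\G \subseteq \F' \Rightarrow \G\ud \subseteq (\F')\ud$ to finish. Either approach works equally well for the purposes of the paper, since only the existence statement is used downstream.
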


\begin{proof}
Let $\F$ be an $(n,m)$-witness family. We are done if $\F$ is convex, so suppose otherwise that there are sets $F_1,F_2 \in \F$ and $A \in 2^{[n]}\setminus \F$ such that $F_1 \subseteq A \subseteq F_2$. We may further suppose that $F_2$ has been chosen so that it is maximal in $\F$. Let $\mathcal{G}=(\F \setminus \{F_2\}) \cup \{A\}$. Then $\mathcal{G}\ua = \F\ua$ because $F_1$ is in both $\F$ and $\mathcal{G}$ and $\{F_2\}\ua \subseteq \{A\}\ua \subseteq \{F_1\}\ua$ since $F_1 \subseteq A \subseteq F_2$. Further, $\mathcal{G}\da \subseteq \F\da \setminus \{F_2\}$ because no proper superset of $F_2$ is in $\F$ and because $\{A\}\da \subseteq \{F_2\}\da \setminus \{F_2\}$ since $A \subsetneq F_2$. It follows that $|\mathcal{G}\ud| \leq |\F\ud|$ and hence that $|\mathcal{G}\ud|=|\F\ud|=\Phi(n,m)$ by the definition of $\Phi(n,m)$. Furthermore $F_2 \in \G\ua \setminus \G\da$ and hence $|\mathcal{G}\ua \cap \mathcal{G}\da| < |\F\ua \cap \F\da|$. So we can iterate this process until we eventually obtain an $(n,m)$-witness family that is convex.
\end{proof}

Equipped with Lemma~\ref{L:CSNoGaps}, we can also exhibit the idea underlying the first case of the recursion in Theorem~\ref{T:isoperRecursive}. When reading the lemma below it is important to note that $\P\ud$ is defined with respect to the ground set $\{2,\ldots,n-1\}$ while $\F\ud$ is defined with respect to the ground set $[n]$.

\begin{lemma}\label{L:sandwich}
Let $n$ and $m$ be nonnegative integers with $n \geq 2$ and $m \leq 2^{n-2}$. If $\P$ is a convex family of subsets of $\{2,\ldots,n-1\}$ such that $|\P|=m$ and $\F$ is the $(n,m)$-family given by $\F=\{\{1\} \cup P:P \in \P\}$, then $\F$ is convex and $|\F\ud|=2|\P\ud|+m$.
Hence $\Phi(n,m) \leq 2\Phi(n-2,m)+m$.
\end{lemma}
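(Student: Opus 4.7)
First I would verify that $\F$ is convex: if $F_1, F_2 \in \F$ and $F_1 \subseteq A \subseteq F_2$, then $A$ must contain $1$ and avoid $n$, so $A = \{1\} \cup Q$ for some $Q \subseteq \{2,\ldots,n-1\}$ sandwiched between the two members of $\P$ corresponding to $F_1$ and $F_2$; convexity of $\P$ then delivers $Q \in \P$, whence $A \in \F$.

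For the count of $|\F\ud|$, the plan is to partition $2^{[n]}$ according to how a set $S$ intersects $\{1,n\}$, exploiting that every member of $\F$ contains $1$ and omits $n$. Sets $S$ with $n \in S$ and $1 \notin S$ lie in neither $\F\ua$ nor $\F\da$, so contribute nothing. In each of the three remaining cases I expect the condition $S \in \F\ud$ to reduce transparently to a condition on $T := S \cap \{2,\ldots,n-1\}$: writing $F \in \F$ as $\{1\} \cup P$ with $P \in \P$ and checking sub/superset relations, the contributions should come out to $|\P\da|$ when $1,n \notin S$ (only $\F\da$-membership is possible, equivalent to $T \in \P\da$), $|\P\ud|$ when $1 \in S$ and $n \notin S$ (both containments reduce, respectively, to $T \in \P\ua$ and $T \in \P\da$), and $|\P\ua|$ when $1,n \in S$ (only $\F\ua$-membership is possible, equivalent to $T \in \P\ua$). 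Here the shadows of $\P$ are taken in the ground set $\{2,\ldots,n-1\}$.

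Summing gives $|\F\ud| = |\P\ua| + |\P\da| + |\P\ud|$. Convexity of $\P$ forces $|\P\ua \cap \P\da| = |\P| = m$, so $|\P\ua| + |\P\da| = |\P\ud| + m$ and hence $|\F\ud| = 2|\P\ud| + m$, as required. The bound $\Phi(n,m) \leq 2\Phi(n-2,m) + m$ then follows by applying the construction to a convex $(n-2,m)$-witness family $\P$, whose existence is guaranteed by Lemma~\ref{L:CSNoGaps}. I do not foresee any real obstacle here; the only thing requiring care is keeping the two different ground sets $\{2,\ldots,n-1\}$ (for $\P$) and $[n]$ (for $\F$) straight throughout the computation, so that each subset of $[n]$ is counted in exactly one case of the partition.
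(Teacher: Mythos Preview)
Your proposal is correct and follows essentially the same approach as the paper: both decompose according to which of $1$ and $n$ lie in a set, identify the pieces with $\P\da$, $\P\ua$, and $\P\ud$ (the paper does this by writing $\F\da$ and $\F\ua$ explicitly and then using $|\F\ua\cap\F\da|=m$, whereas you count $\F\ud$ directly in each block and invoke convexity of $\P$ instead of $\F$), and both finish via Lemma~\ref{L:CSNoGaps}. The organisational difference is cosmetic.
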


\begin{proof}
Remembering the respective ground sets of $\F$ and $\P$, we have
\begin{equation}\label{E:doubleUpDown}
\F\da=\P\da \cup \bigl\{\{1\} \cup P: P \in \P\da\bigr\} \text{~~and~~} \F\ua=\bigl\{\{1\} \cup P: P \in \P\ua\bigr\} \cup \bigl\{\{1,n\} \cup P: P \in \P\ua\bigr\}.
\end{equation}
From $\P\ua \cap \P\da = \P$ and \eqref{E:doubleUpDown} it follows that $\F\ua \cap \F\da = \F$, so $\F$ is indeed convex. Using $|\F\ua \cap \F\da|=m$ and $|\P\ua \cap \P\da| = m$ together with \eqref{E:doubleUpDown}, we also have
\[|\F\ud|=|\F\ua|+|\F\da|-m=2|\P\ua|+2|\P\da|-m=2|\P\ud|+m.\]
Because Lemma~\ref{L:CSNoGaps} guarantees that there exists a convex family $\P$ of subsets of $\{2,\ldots,n-1\}$ such that $|\P|=m$ and $|\P\ud|=\Phi(n-2,m)$, it follows that $\Phi(n,m) \leq 2\Phi(n-2,m)+m$.
\end{proof}

In view of Lemmas~\ref{L:selfConjugate} and \ref{L:sandwich}, to prove Theorem~\ref{T:isoperRecursive} it only remains to show that $\Phi(n,m) \geq 2\Phi(n-2,m)+m$ when $m \in \{0,\ldots,2^{n-2}\}$. This takes some more work, however. In particular, we require a result saying that we can always find a convex witness family that is invariant under certain kinds of shifting operations. We pursue this in the next section.

\section{Shifting}\label{S:shift}

We make use of a variant of the classical notion of shifting (see \cite[\S5]{Bol}, for example). Most significantly, our notion differs from the usual one in that it allows sets to be replaced with sets of a different size. For a family $\F$ of subsets of $[n]$ and two nonempty disjoint subsets $I$ and $J$ of $[n]$ we define, for each $F \in \F$,
\[S^{\F}_{I,J}(F)=
\left\{
  \begin{array}{ll}
    (F \setminus J) \cup I & \hbox{if $I \cap F = \emptyset$, $J \subseteq F$ and $(F \setminus J) \cup I \notin \F$} \\
    F & \hbox{otherwise.}
  \end{array}
\right.\]
We further define $S_{I,J}(\F)=\{S^{\F}_{I,J}(F):F \in \F\}$. Observe that $|S_{I,J}(\F)|=|\F|$. If $I=\{i\}$ and $J=\{j\}$, this definition agrees with the conventional definition of a shift. We say a family $\F$ of subsets of $[n]$ is \emph{strongly shifted} if $S_{I,J}(\F)=\F$ for all $I,J \subseteq [n]$ such that $\max (I) < \min (J)$. Our goal in this section is to show that, for any admissible $n$ and $m$, there always exists a convex $(n,m)$-witness family that is strongly shifted. We first require some more basic concepts and notation.

Let $\F$ be a family of subsets of $[n]$. For any set $F \in \F$, let $\overline{F}$ denote the set $[n] \setminus F$ and let $\overline{\F}$ denote the family $\{\overline{F} :F \in \F\}$. Note that $\overline{\F}$ does not mean $2^{[n]} \setminus \F$.  Let $\rho$ be the permutation of $[n]$ such that $\rho(x)=n+1-x$ for each $x \in [n]$ and for a subset $F$ of $[n]$, let $\rho(F)=\{\rho(x):x \in F\}$. The \emph{reverse} of $\F$ is the family $\rho(\F)=\{\rho(F):F \in \F\}$. Obviously $|\rho(\F)|=|\F|$. We note some basic properties of complements, reverses and shifts.

\begin{lemma}\label{L:relbetops}
Let $\F$ be a family of subsets of $[n]$ and $I$ and $J$ be nonempty disjoint subsets of $[n]$. Then
\begin{itemize}
\item[\textup{(i)}]
$\left(\,\overline{\F}\,\right)\da = \overline{\F\ua}$, \quad $\left(\,\overline{\F}\,\right)\ua= \overline{\F\da}$,
\quad and \quad $\left(\overline{\F}\right)\ud = \overline{\F\ud}$;
\item[\textup{(ii)}]
$\overline{\rho(\F)}= \rho\left(\overline{\F}\right)$, \quad
$\rho(\F\da)= (\rho(\F))\da$, \quad
$\rho(\F\ua)= (\rho(\F))\ua$, \quad and \quad
$\rho(\F\ud)= (\rho(\F))\ud$;
\item[\textup{(iii)}]
$\overline{S_{I,J}(\F)} = S_{J,I}(\overline{\F})$.
\end{itemize}
\end{lemma}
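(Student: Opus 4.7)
The proof of all three parts reduces to routine set-theoretic manipulations based on three underlying facts: complementation reverses inclusion, the permutation $\rho$ is a bijection on $[n]$ that preserves inclusion, and $\rho$ commutes with complementation in the sense that $\rho([n] \setminus F) = [n] \setminus \rho(F)$ for every $F \subseteq [n]$. My plan is to handle the three parts separately, treating (i) and (ii) by direct chains of equivalences and (iii) by verifying the identity pointwise on each $F \in \F$.

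For (i), I would prove $(\overline{\F})\da = \overline{\F\ua}$ via the chain: $S \in (\overline{\F})\da$ iff $S \subseteq \overline{F}$ for some $F \in \F$ iff $F \subseteq \overline{S}$ for some $F \in \F$ iff $\overline{S} \in \F\ua$ iff $S \in \overline{\F\ua}$. The identity $(\overline{\F})\ua = \overline{\F\da}$ follows from the same argument with inclusions reversed, and the third identity in (i) is then obtained by taking the union of the first two, since complementation of subsets of $[n]$ distributes over union. For (ii), the identity $\overline{\rho(\F)} = \rho(\overline{\F})$ is immediate from $\rho([n] \setminus F) = [n] \setminus \rho(F)$, while the remaining three identities follow analogously from the inclusion-preservation of $\rho$ (with the identity for $\ud$ again obtained by taking the union of those for $\ua$ and $\da$).

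Part (iii) is the one step requiring a bit more care because of the piecewise definition of $S^{\F}_{I,J}$. The plan is to verify pointwise, for each $F \in \F$, that $\overline{S^{\F}_{I,J}(F)} = S^{\overline{\F}}_{J,I}(\overline{F})$. Under complementation the three conditions triggering a nontrivial shift swap the roles of $I$ and $J$: $I \cap F = \emptyset$ is equivalent to $I \subseteq \overline{F}$, and $J \subseteq F$ is equivalent to $J \cap \overline{F} = \emptyset$. Using that $I$ and $J$ are disjoint, one computes $\overline{(F \setminus J) \cup I} = (\overline{F} \setminus I) \cup J$, which makes the third side condition $(F \setminus J) \cup I \notin \F$ equivalent to $(\overline{F} \setminus I) \cup J \notin \overline{\F}$. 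Hence the nontrivial cases coincide and evaluate to complementary sets, while the trivial cases give $\overline{F}$ on both sides, so the family identity $\overline{S_{I,J}(\F)} = S_{J,I}(\overline{\F})$ follows. The work throughout is entirely mechanical and there is no real obstacle; the only point that demands attention is the bookkeeping in (iii), where one must track the swap of $I$ and $J$ and verify that the non-membership clause translates correctly across complementation.
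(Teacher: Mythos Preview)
Your proposal is correct and follows essentially the same approach as the paper: parts (i) and (ii) are dispatched as routine consequences of complementation reversing inclusion and $\rho$ being a permutation of the ground set, and part (iii) is proved pointwise by checking that the three conditions in the definition of $S^{\F}_{I,J}$ translate under complementation into those for $S^{\overline{\F}}_{J,I}$, with $\overline{(F\setminus J)\cup I}=(\overline{F}\setminus I)\cup J$ handling both the non-membership clause and the value in the nontrivial case. Your write-up is in fact slightly more explicit than the paper's on (i), but the arguments are otherwise identical.
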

\begin{proof}
It is a simple exercise to prove (i) by using that $A \subseteq B$ if and only if $\overline{B}\subseteq \overline{A}$ for $A,B \subseteq [n]$. The properties in (ii) are immediate from the fact that $\rho$ is a permutation of the ground set $[n]$.

For (iii), we abbreviate $S_{I,J}^{\F}$ to $S$ and $S_{J,I}^{\overline{\F}}$ to $S'$.
Let $F \in \F$.
Note that $J \cap \overline{F} = \emptyset$ if and only if $J \subseteq F$, that $I \subseteq \overline{F}$ if and only if $I \cap F= \emptyset$, and that $(\overline{F}\setminus I) \cup J \in \overline{\F}$ if and only if $(F\setminus J) \cup I \in \F$,
since $\overline{ (F\setminus J) \cup I} =  (\overline{F}\setminus I) \cup J$.
Thus $S({F})={F}$ if and only if $S'(\overline{F})=\overline{F}$. When $S(F)\neq F$ and $S'(\overline{F})\neq \overline{F}$ we have
\[
\overline{S(F)} =\overline{(F\setminus J) \cup I} =  \left(\overline{F}\setminus I\right) \cup J = S'(\overline{F})\,.
\]
Thus $\overline{S(F)} = S'(\overline{F})$ for all $F \in \F$ and we have $\overline{S_{I,J}(\F)}=S_{J,I}(\overline{\F})$.
\end{proof}

We now prove  that, under certain conditions, a witness family remains a witness family even when a shift is applied to it. This result is analogous to results for conventional shifts that are often used to prove the Kruskal-Katona theorem (see \cite{Fra}, for example).

\begin{lemma}\label{L:shiftfam}
Let $\F$ be a family of subsets of $[n]$ and let $I$ and $J$ be nonempty disjoint subsets of $[n]$. If $S_{I',J}(\F)=\F$ for all nonempty proper subsets $I'$ of $I$ and $S_{I,J'}(\F)=\F$ for all nonempty proper subsets $J'$ of $J$, then
\begin{itemize}
    \item[\textup{(i)}]
$(S_{I,J}(\F))\da \subseteq S_{I,J}(\F\da)$ and hence $|(S_{I,J}(\F))\da| \leq |\F\da|$;
    \item[\textup{(ii)}]
$(S_{I,J}(\F))\ua \subseteq S_{I,J}(\F\ua)$ and hence $|(S_{I,J}(\F))\ua| \leq |\F\ua|$.
\end{itemize}
\end{lemma}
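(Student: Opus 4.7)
The plan is to prove (i) directly by establishing the containment $(S_{I,J}(\F))\da \subseteq S_{I,J}(\F\da)$; the size inequality then follows since shifting preserves family size, i.e., $|S_{I,J}(\F\da)| = |\F\da|$. Once (i) is in hand, (ii) follows by a duality argument: apply (i) to $\overline{\F}$ with the roles of $I$ and $J$ swapped, and then translate the conclusion back using parts (i) and (iii) of Lemma~\ref{L:relbetops}. The hypotheses transfer under this duality because $S_{J',I}(\overline{\F}) = \overline{S_{I,J'}(\F)}$, so $S_{J',I}(\overline{\F})=\overline{\F}$ exactly when $S_{I,J'}(\F)=\F$.

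To prove (i), I would take $B \in (S_{I,J}(\F))\da$ together with $A \in S_{I,J}(\F)$ satisfying $B \subseteq A$, and case-split on the two ways $A$ can arise in $S_{I,J}(\F)$. Suppose first that $A \in \F$, so $B \in \F\da$ and the goal becomes $S^{\F\da}_{I,J}(B) = B$. Assuming otherwise, $I \cap B = \emptyset$, $J \subseteq B$, and $(B \setminus J) \cup I \notin \F\da$, so it suffices to derive the contradictory conclusion $(A \setminus J) \cup I \in \F\da$. Because $A$ is fixed by $S^{\F}_{I,J}$ and $J \subseteq A$, either $(A \setminus J) \cup I \in \F$ (done immediately) or $I \cap A \neq \emptyset$. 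In the latter case, if $I \subseteq A$ then $(A \setminus J) \cup I = A \setminus J \subseteq A \in \F$; otherwise $I \setminus A$ is a nonempty proper subset of $I$ and the hypothesis $S_{I \setminus A, J}(\F) = \F$ forces $(A \setminus J) \cup (I \setminus A) \in \F$, which equals $(A \setminus J) \cup I$ because $I \cap A \subseteq A \setminus J$. Either horn yields the required contradiction.

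Now suppose instead $A \notin \F$, so $A = (F \setminus J) \cup I$ with $F \in \F$, $I \cap F = \emptyset$, and $J \subseteq F$; in particular $I \subseteq A$ and $J \cap A = \emptyset$. I would then split on $I \cap B$. If $I \cap B = \emptyset$, then $B \subseteq A \setminus I = F \setminus J \subseteq F$, so $B \in \F\da$, and $J \cap B \subseteq J \cap A = \emptyset$ blocks the shift on $B$. If $I \cap B$ is a nonempty proper subset of $I$, the hypothesis applied with $I^* := I \cap B$ to $F$ forces $(F \setminus J) \cup I^* \in \F$; this set contains $B$, so $B \in \F\da$, and the shift on $B$ is blocked by $I \cap B \neq \emptyset$. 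Finally, if $I \subseteq B$, set $B' := (B \setminus I) \cup J$; then $B' \subseteq F$, so $B' \in \F\da$, and the identity $(B' \setminus J) \cup I = B$ (valid since $J \cap B = \emptyset$) shows that either $B \in \F\da$ (with the shift blocked by $I \subseteq B$) or $B = S^{\F\da}_{I,J}(B')$.

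The main obstacle will be the case analysis, particularly in handling the three configurations where the hypothesis cannot be invoked on a proper nonempty subset of $I$: the sub-case $I \subseteq A$ in the first part, and the sub-cases $I \cap B = \emptyset$ and $I \subseteq B$ in the second. Each of these must be resolved by a direct argument, and the key is to select the right witness set, namely $(A \setminus J) \cup I$ itself when $I \subseteq A$, the explicit superset $(F \setminus J) \cup I^*$ when $I \cap B$ is a proper nonempty subset of $I$, and the preimage $(B \setminus I) \cup J$ under the shift when $I \subseteq B$.
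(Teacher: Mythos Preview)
Your proposal is correct and follows essentially the same approach as the paper: the same case split on whether the witness in $S_{I,J}(\F)$ arose via the shift, the same sub-case analysis on how $I$ meets the relevant sets (invoking the $I'$-hypothesis when a nonempty proper subset of $I$ is available, and handling the extreme configurations directly), and the same duality argument for (ii) via Lemma~\ref{L:relbetops}(i),(iii). The only differences are cosmetic---you argue directly rather than by contradiction, use different variable names, and reverse the order of the two main cases.
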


\begin{proof}

\noindent {\textbf{(i).}} We abbreviate $S_{I,J}^{\F}$ to $S$ and $S^{\F\da}_{I,J}$ to $S'$. Suppose for a contradiction that there is a set $A$ in $(S_{I,J}(\F))\da \setminus S_{I,J}(\F\da)$. Since $A \in (S_{I,J}(\F))\da$, we have that $A \subseteq S(F)$ for some $F \in \F$. We consider two cases according to whether $S(F)=F$.

Suppose first that $S(F) \neq F$.
Then $J \subseteq F$, $I \cap F = \emptyset$ and $S(F)=(F \setminus J) \cup I \notin \F$.
Further, $J \cap A = \emptyset$ since $A \subseteq S(F)$. We must have $I \nsubseteq A$ for otherwise the set $B=(A \setminus I) \cup J$ would be a subset of $F$ since $A \subseteq S(F)$, and the fact that $B \in \F\da$ would contradict our assumption that $A \notin S_{I,J}(\F\da)$. We will show that $A \in \F\da$ and then, since $J \cap A = \emptyset$, we will have $S'(A)=A$ contradicting $A \notin S_{I,J}(\F\da)$. If $A \cap I=\emptyset$, then $A \subseteq F$ and so $A \in \F\da$. If $A \cap I \neq \emptyset$, then by our hypotheses and since $ I \nsubseteq A$, we have $S_{I',J}(\F)=\F$ where $I'=I \cap A$. Thus $(F\setminus J) \cup I' \in \F$, which implies that $A \in \F\da$.

Now suppose that $S(F)=F$. Then $A \subseteq F$ and therefore $A \in \F\da$. So, since $A \notin S_{I,J}(\F\da)$, we must have $S'(A) \neq A$. So $J \subseteq A$, $I \cap A = \emptyset$ and $S'(A)=(A \setminus J) \cup I \notin \F\da$. The first of these facts implies that $J \subseteq F$ since $A \subseteq F$ and the last of these facts implies that $I \nsubseteq F$ for otherwise $S'(A)=(A \setminus J) \cup I \subseteq F$ contradicting $ S'(A)\notin \F\da$. If $I \cap F = \emptyset$ then, since $J \subseteq F$ and $S(F)=F$, the set $(F \setminus J) \cup I$ must be in $\F$, contradicting $S'(A)=(A \setminus J) \cup I \notin \F\da$. If $I \cap F \neq \emptyset$, then by our hypotheses and since $I \nsubseteq F$, we have $S_{I',J}(\F) =\F$ where $I' = I \setminus F$. Hence, we have that $(F\setminus J) \cup I = (F \setminus J) \cup I' \in \F$,
and so $S'(A)=(A \setminus J) \cup I \subseteq (F \setminus J) \cup I$, contradicting $S'(A)\notin \F\da$. \smallskip

\noindent {\textbf{(ii).}} By using (i) and (iii) of Lemma~\ref{L:relbetops} and applying (i) of this lemma, we have
\[
\overline{(S_{I,J}(\F))\ua} =  \bigl(\,\overline{S_{I,J}(\F)}\,\bigr)\da = \bigl(S_{J,I}\bigl(\overline{\F}\bigr)\bigr)\da \subseteq
  S_{J,I}\bigl(\bigl(\,\overline{\F}\,\bigr)\da\bigr) = S_{J,I}\bigl(\,\overline{\F\ua}\,\bigr) = \overline{S_{I,J}(\F\ua)}
\]
and hence $(S_{I,J}(\F))\ua \subseteq S_{I,J}(\F\ua)$. Thus (ii) follows from (i).
\end{proof}

Using Lemmas~\ref{L:CSNoGaps} and \ref{L:shiftfam}, we can now achieve the goal of this section.

\begin{lemma}\label{L:canMinFamSF}
Let $n$ and $m$ be nonnegative integers with $m \leq 2^n$. There exists an $(n,m)$-witness family $\F$ that is convex and strongly shifted.
\end{lemma}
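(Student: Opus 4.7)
My plan is to pick, among all convex $(n,m)$-witness families (nonempty by Lemma~\ref{L:CSNoGaps}), one that minimises the potential $\mu(\F)=\sum_{F\in\F}\sum_{i\in F}2^i$, and then argue that this minimiser must already be strongly shifted. The key observation driving the argument is that any shift $S_{I,J}$ with $\max I<\min J$ that actually alters $\F$ replaces $J$ by $I$ in at least one set, and each such replacement drops that set's contribution to $\mu$ by $\sum_{j\in J}2^j-\sum_{i\in I}2^i>0$, since $\sum_{i\in I}2^i<2^{\min J}\leq\sum_{j\in J}2^j$. Hence any such nontrivial shift strictly decreases $\mu$.

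Suppose, for contradiction, that $\F$ is not strongly shifted, and pick disjoint nonempty $I,J\subseteq[n]$ with $\max I<\min J$ and $S_{I,J}(\F)\neq\F$, chosen to minimise $|I|+|J|$. Every nonempty proper subset $I'$ of $I$ still satisfies $\max I'<\min J$, so by minimality $S_{I',J}(\F)=\F$; similarly $S_{I,J'}(\F)=\F$ for every nonempty proper subset $J'$ of $J$. Thus the hypothesis of Lemma~\ref{L:shiftfam} is met, yielding $|\G\da|\leq|\F\da|$ and $|\G\ua|\leq|\F\ua|$ for $\G=S_{I,J}(\F)$. Combining the trivial inclusion $\G\subseteq\G\ua\cap\G\da$ (which gives $|\G\ud|\leq|\G\ua|+|\G\da|-|\G|$) with the convexity identity $|\F\ud|=|\F\ua|+|\F\da|-m$ yields $|\G\ud|\leq|\F\ud|$, so $\G$ is also an $(n,m)$-witness family, and the opening observation gives $\mu(\G)<\mu(\F)$.

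The remaining obstacle is that $\G$ need not be convex. I would resolve this by feeding $\G$ into the convexification procedure used in the proof of Lemma~\ref{L:CSNoGaps}: each iteration there replaces some maximal $F_2$ in the current family by a set $A\subsetneq F_2$ while preserving the witness property. Since $A\subsetneq F_2$ forces $\mu$ to strictly decrease at each such step, the resulting convex witness family $\F'$ satisfies $\mu(\F')\leq\mu(\G)<\mu(\F)$, contradicting the choice of $\F$. The main obstacle is coordinating the shift and the convexification so that both preserve the witness property while co-operatively reducing the same potential $\mu$; the minimal choice of $|I|+|J|$ is what unlocks Lemma~\ref{L:shiftfam} during the shift phase, and the fact that Lemma~\ref{L:CSNoGaps}'s replacement is always by a proper subset is what ensures $\mu$ continues to drop during the convexification phase.
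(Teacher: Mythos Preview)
Your argument is correct and follows essentially the same route as the paper's: start from a convex witness family (Lemma~\ref{L:CSNoGaps}), pick $I,J$ minimal so that the hypothesis of Lemma~\ref{L:shiftfam} is satisfied, and shift. The differences are in bookkeeping rather than substance. First, for termination the paper observes that a nontrivial $S_{I,J}$ strictly decreases the number of sets containing $\max(J)$ while leaving the counts for larger elements unchanged, whereas you use the single potential $\mu(\F)=\sum_{F}\sum_{i\in F}2^i$; both devices work and yours is arguably cleaner. Second, you miss a small simplification: from $|\G\ud|+|\G\ua\cap\G\da|=|\G\ua|+|\G\da|\leq \Phi(n,m)+m$ together with $|\G\ud|\geq\Phi(n,m)$ and $|\G\ua\cap\G\da|\geq m$, the paper deduces $|\G\ua\cap\G\da|=m$, so $\G$ is already convex and no post-shift convexification is needed. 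Your detour through Lemma~\ref{L:CSNoGaps}'s procedure is harmless (each replacement is by a proper subset, so $\mu$ continues to drop), but you can simply delete that step.
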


\begin{proof}
By Lemma~\ref{L:CSNoGaps} there is a convex $(n,m)$-witness family $\F$. We are done if $\F$ is strongly shifted, so suppose otherwise. Then there are subsets $I$ and $J$ of $[n]$ such that $\max(I) < \min(J)$ and $S_{I,J}(\F) \neq \F$. We may further suppose that $I$ and $J$ have been chosen to be minimal in the sense that $S_{I',J}(\F)=\F$ for all nonempty proper subsets $I'$ of $I$ and $S_{I,J'}(\F)=\F$ for all nonempty proper subsets $J'$ of $J$ (note that we necessarily have $\max(I') < \min(J)$ and $\max(I) < \min(J')$). Let $\G$ be the $(n,m)$-family $S_{I,J}(\F)$. Then
\[|\G\ud|+|\G\ua \cap \G\da| = |\G\ua|+|\G\da| \leq |\F\ua| + |\F\da| = \Phi(n,m)+m\]
where the inequality follows by Lemma~\ref{L:shiftfam} and the latter equality follows because
$\F$ is a convex $(n,m)$-witness family. Thus, because we know  $|\G\ud| \geq \Phi(n,m)$ and $|\G\ua \cap \G\da| \geq m$, we must in fact have equality in both cases and hence that $\G$ is a convex $(n,m)$-witness family. Furthermore, since $\G=S_{I,J}(\F) \neq \F$, strictly fewer sets in $\G$ than in $\F$ contain $\max(J)$ and the same number of sets in $\G$ and in $\F$ contain $x$ for each $x \in \{\max(J)+1,\ldots,n\}$. It follows that if we iterate this procedure we will never return to the same family and hence must eventually obtain an  $(n,m)$-witness family that is convex and strongly shifted.
\end{proof}

\section{Proof of Theorem~\ref{T:isoperRecursive} and Corollary~\ref{C:isoperBounds}}\label{S:mainProof}

With the aid of Lemma~\ref{L:canMinFamSF}, we can finish proving Theorem~\ref{T:isoperRecursive}. Using the definition from Theorem~\ref{T:totalOrdering}, we have $\C_{n,n-2}=\{A \subseteq [n]: \{1\} \subseteq A \subseteq [n-1]\}$. This family will play a key role in this section and we abbreviate it as $\C$ throughout (the value of $n$ will always be clear from context). Firstly we give a lower bound on the size of $\F\ud$ for certain families $\F$ that are not subfamilies of $\C$.

\begin{lemma}\label{L:noSetsWith1}
Let $\F$ be a strongly shifted $(n,m)$-family such that $m \leq 2^{n-2}$. If $\F \nsubseteq \C$, then $|\F\ud| \geq 2^{n-1}+m$.
\end{lemma}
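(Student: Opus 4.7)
The plan is to reduce by symmetry to a single case, extract $\{1\} \in \F$ to accrue $2^{n-1}$ sets in $\F\ud$, and then find $m$ more sets avoiding $1$.

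\emph{Symmetry reduction and the easy cases.} The condition $\F \nsubseteq \C$ is equivalent to the disjunction ``some $F \in \F$ omits $1$'' or ``some $F \in \F$ contains $n$''. By Lemma~\ref{L:relbetops}, $\rho(\overline{\F})$ is still strongly shifted, still satisfies $\rho(\overline{\F}) \nsubseteq \C$, has the same size as $\F$, and satisfies $|\rho(\overline{\F})\ud| = |\F\ud|$; moreover, it swaps the two alternatives above. So I may assume some $F_* \in \F$ satisfies $1 \notin F_*$. If $F_* = \emptyset$ then $\F\ua = 2^{[n]}$, giving $|\F\ud| = 2^n \geq 2^{n-1} + m$ since $m \leq 2^{n-2}$. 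Otherwise $F_* \neq \emptyset$, and applying the strong-shift relation with $I = \{1\}$, $J = F_*$ (whose hypotheses $\max\{1\} = 1 < \min F_*$, $\{1\} \cap F_* = \emptyset$, and $F_* \subseteq F_*$ all hold) produces $\{1\} \in \F$. Hence $\{1\}\ua \subseteq \F\ua \subseteq \F\ud$ accounts for exactly $2^{n-1}$ sets of $\F\ud$, all containing $1$.

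\emph{Shadows contribute $m - K$ sets avoiding $1$.} It remains to exhibit $m$ distinct sets in $\F\ud \cap 2^{\{2, \ldots, n\}}$. Split $\F = \F_0 \sqcup \F_1$ according to whether $1 \in F$. Every $F \in \F_0$ lies in $\F\ud \cap 2^{\{2, \ldots, n\}}$ directly, and every $F \in \F_1$ yields the shadow $F \setminus \{1\} \in \F\da \cap 2^{\{2, \ldots, n\}} \subseteq \F\ud$. Thus the family $\hat{\F} := \F_0 \cup \{F \setminus \{1\} : F \in \F_1\}$ is contained in $\F\ud \cap 2^{\{2, \ldots, n\}}$. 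Letting $K$ be the number of \emph{paired} $F_0 \in \F_0$ with $F_0 \cup \{1\} \in \F_1$ (the only collisions of $F \mapsto F \setminus \{1\}$), one has $|\hat{\F}| = m - K$, supplying $m - K$ of the desired sets.

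\emph{Covering the $K$ deficit.} For each paired $F_0$ I would exhibit a distinct \emph{sibling} superset $G_{F_0} = F_0 \cup \{j_{F_0}\}$ with $j_{F_0} \in \{2, \ldots, n\} \setminus F_0$ and $G_{F_0} \notin \hat{\F}$; since $F_0 \subseteq G_{F_0}$ and $F_0 \in \F$, such $G_{F_0}$ automatically lies in $\F\ua \cap 2^{\{2, \ldots, n\}} \subseteq \F\ud$. The hypothesis $m \leq 2^{n-2}$, combined with the strong-shift derivation that every nonempty $F \in \F_0$ forces $\{\{1\} \cup F' : F' \subsetneq F\} \cup \{F\} \subseteq \F$, gives $m \geq 2^{|F_0|}$ and therefore $|F_0| \leq n - 2$; in particular $\{2, \ldots, n\} \setminus F_0$ is nonempty and candidate $j_{F_0}$'s exist. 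The main technical obstacle is selecting the $j_{F_0}$'s consistently so that the $G_{F_0}$'s are pairwise distinct and disjoint from $\hat{\F}$. In the clean sub-case where no $F \in \F$ contains $n$ (so $\hat{\F} \subseteq 2^{\{2, \ldots, n-1\}}$), the uniform choice $j_{F_0} = n$ for every paired $F_0$ works; in the general case one instead appeals to a matching-type argument or a further exploitation of the strong-shift closure to produce the required assignment.
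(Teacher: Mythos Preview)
Your symmetry reduction via $\rho(\overline{\F})$ is a nice touch and correctly collapses Cases~2 and~3 of the paper's proof into one. Your ``clean sub-case'' then matches the paper's Case~2 argument (your $\hat{\F}$ together with the siblings $F_0\cup\{n\}$ is a repackaging of the paper's injection $\varphi$). So for that portion the proof is fine.

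The gap is the ``general case'' at the end. After your reduction you know some $F_*\in\F$ omits $1$, but you have not ruled out that some $F''\in\F$ contains $n$; your final sentence simply asserts that a ``matching-type argument or a further exploitation of the strong-shift closure'' handles this, without giving one. That is not a proof, and your sibling scheme as stated does not obviously go through here: when sets of $\F$ may contain $n$, the candidates $F_0\cup\{j\}$ can land back in $\hat{\F}$, and you have given no mechanism for choosing the $j_{F_0}$'s injectively while avoiding $\hat{\F}$.

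In fact this remaining case is the \emph{easiest} one, and the paper dispatches it first (its Case~1). From $n\in F''$ and $F''\neq [n]$ (the case $[n]\in\F$ being trivial since then $\F\da=2^{[n]}$), the strong shift $S_{I,J}$ with $I=[n-1]\setminus F''$ and $J=\{n\}$ forces $[n-1]\in\F$. Then
\[
\{\{1\}\}\ua\cup\{[n-1]\}\da=\{A\subseteq[n]:1\in A\text{ or }n\notin A\}
\]
already has size $3\cdot 2^{n-2}\geq 2^{n-1}+m$, and no matching argument is needed. You should replace the hand-wave with this two-line observation.
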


\begin{proof}
Suppose that $\F \nsubseteq \C$. We may assume that neither $\emptyset$ nor $[n]$ is in $\F$ for otherwise $\F\ud = 2^{[n]}$ and the result obviously holds. Since $\F \nsubseteq \C$ we must be in one of the following three cases. \smallskip

\noindent {\textbf{Case 1.}} Suppose that $1 \notin F'$ and $n\in F''$ for some $F',F'' \in \F$ (possibly $F'=F''$). Because $S_{I,J}(\F)=\F$ where $I = \{1\}$ and $J = F'$, we must have that $(F'\setminus J) \cup I = \{1\}$ is in $\F$ (note that $F' \neq \emptyset$). Similarly, because $S_{I,J}(\F)=\F$ where $I = [n-1] \setminus F''$ and $J = \{n\}$, we must have that $(F'' \setminus J) \cup I = [n-1]$ is in $\F$ (note that $I \neq \emptyset$ since $F'' \neq [n]$). Now $\{\{1\}\}\ua \cup \{[n-1]\}\da = \{A \subseteq [n]:1 \in A \text{ or } n \notin A\}$ and hence $|\F\ud| \geq 3\cdot2^{n-2}$. So the result follows since $m \leq 2^{n-2}$.\smallskip

\noindent {\textbf{Case 2.}} Suppose that $1 \notin F'$ for some $F' \in \F$ and that no set in $\F$ contains $n$. As in Case~1, for $I = \{1\}$ and $J = F'$, we must have that $(F'\setminus J) \cup I = \{1\}$ is in $\F$ (note that $F' \neq \emptyset$). Then $\{\{1\}\}\ua \subseteq \F\ud$ and $|\{\{1\}\}\ua|=2^{n-1}$. We will complete the proof by showing that $|\F\ud \setminus \{\{1\}\}\ua| \geq m$.  Let $\varphi: \F \rightarrow \F\ud \setminus \{\{1\}\}\ua$ be the function defined by
\[\varphi(A)=
\left\{
  \begin{array}{ll}
    A \cup \{n\} \quad& \hbox{if $1 \notin A$} \\
    A \setminus \{1\}  & \hbox{if $1 \in A$.}
  \end{array}
\right.\]
For each $A \in \F$, $\varphi(A)$ is indeed in $\F\ud \setminus \{\{1\}\}\ua$ because $1 \notin \varphi(A)$,  $A \subseteq \varphi(A)$ if $1 \notin A$ and $\varphi(A) \subseteq A$ if $1 \in A$. Furthermore,  $\varphi$ is an injection because no set in $\F$ contains $n$. Thus $|\F\ud \setminus \{\{1\}\}\ua| \geq m$ and the proof is complete.\smallskip

\noindent {\textbf{Case 3.}} Suppose that each set in $\F$ contains $1$ and that $n \in F''$ for some $F'' \in \F$. The argument follows that of Case~2 closely. For $I = [n-1] \setminus F''$ and $J = \{n\}$, we must have that $(F'\setminus J) \cup I = [n-1]$ is in $\F$ (note that $I \neq \emptyset$ since $F'' \neq [n]$). Then $\{[n-1]\}\da \subseteq \F\ud$ and $|\{[n-1]\}\da|=2^{n-1}$. We will show that $|\F\ud \setminus \{[n-1]\}\da| \geq m$.  Let $\varphi: \F \rightarrow \F\ud \setminus \{[n-1]\}\da$ be the function defined by
\[\varphi(A)=
\left\{
  \begin{array}{ll}
    A \setminus \{1\} \quad& \hbox{if $n \in A$} \\
    A \cup \{n\}  & \hbox{if $n \notin A$.}
  \end{array}
\right.\]
For each $A \in \F$, $\varphi(A)$ is indeed in $\F\ud \setminus \{[n-1]\}\da$ because $n \in \varphi(A)$, $\varphi(A) \subseteq A$ if $n \in A$ and $A \subseteq \varphi(A)$ if $n \notin A$. Furthermore,  $\varphi$ is an injection because each set in $\F$ contains $1$. So the proof is complete.
\end{proof}

Next, we determine $|\F\ud|$ for families $\F$ of a particular form. We only require a very special case of this result for our proof of Theorem~\ref{T:isoperRecursive}, but the more general version we prove here will be useful later.

\begin{lemma}\label{L:joinResults}
Let $n$ and $k$ be integers with $0 \leq k \leq n$. Let $\F_1$ be a convex family of subsets of $[k]$ such that $[k] \in \F_1$, let $\F_2$ be a convex family of subsets of $[n] \setminus [k]$ such that $\emptyset \in \F_2$, and let $\F=\{F_1 \cup F_2: F_1 \in \F_1, F_2 \in \F_2\}$ be a family of subsets of $[n]$. Then $|\F|=|\F_1||\F_2|$ and
\[|\F\ud| = 2^{n-k}|\F_1|+2^{k}|\F_2|-|\F_1||\F_2|.\]
\end{lemma}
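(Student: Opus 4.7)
The plan is to exploit the fact that $[k]$ and $[n] \setminus [k]$ partition $[n]$, so every subset $S$ of $[n]$ decomposes uniquely as $S = S_1 \cup S_2$ with $S_1 = S \cap [k]$ and $S_2 = S \setminus [k]$. First I would use this decomposition to prove $|\F| = |\F_1||\F_2|$: the map $(F_1,F_2) \mapsto F_1 \cup F_2$ from $\F_1 \times \F_2$ to $\F$ is surjective by definition and injective because $F_1$ and $F_2$ can be recovered by intersecting $F_1 \cup F_2$ with $[k]$ and its complement respectively.

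Next I would describe $\F\ua$, $\F\da$, and $\F\ua \cap \F\da$ via the same decomposition. For any $F = F_1 \cup F_2 \in \F$ and any $S \subseteq [n]$, the inclusion $F \subseteq S$ is equivalent to $F_1 \subseteq S_1$ and $F_2 \subseteq S_2$ (since $F_1 \subseteq [k]$ and $F_2 \subseteq [n] \setminus [k]$). Hence $S \in \F\ua$ if and only if $S_1 \in \F_1\ua$ (taken in ground set $[k]$) and $S_2 \in \F_2\ua$ (in ground set $[n] \setminus [k]$). Analogously, $S \in \F\da$ if and only if $S_1 \in \F_1\da$ and $S_2 \in \F_2\da$.

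Now I would invoke the hypotheses. Because $[k] \in \F_1$ and $\F_1$ is convex, any set sandwiched between some $F_1 \in \F_1$ and $[k]$ lies in $\F_1$; this gives $\F_1\ua = \F_1$. Dually, $\emptyset \in \F_2$ and convexity of $\F_2$ give $\F_2\da = \F_2$. Moreover, $[k] \in \F_1$ forces $\F_1\da = 2^{[k]}$ and $\emptyset \in \F_2$ forces $\F_2\ua = 2^{[n]\setminus[k]}$. Substituting these into the descriptions above yields
\[
|\F\ua| = |\F_1| \cdot 2^{n-k}, \qquad |\F\da| = 2^k \cdot |\F_2|,
\]
and $S \in \F\ua \cap \F\da$ if and only if $S_1 \in \F_1$ and $S_2 \in \F_2$, i.e.\ $S \in \F$, so $|\F\ua \cap \F\da| = |\F| = |\F_1||\F_2|$. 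The claimed formula then follows from $|\F\ud| = |\F\ua| + |\F\da| - |\F\ua \cap \F\da|$.

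The proof is essentially a bookkeeping exercise, and the only subtle step is checking that convexity plus the presence of $[k]$ in $\F_1$ (respectively $\emptyset$ in $\F_2$) upgrades the one-sided shadows to equal the families themselves, so that $\F_1\ua \cap \F_1\da = \F_1$ and $\F_2\ua \cap \F_2\da = \F_2$ propagate correctly through the product structure; once that is observed, no calculation is needed beyond inclusion-exclusion.
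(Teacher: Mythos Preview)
Your argument is correct and matches the paper's own proof: both decompose each subset of $[n]$ into its $[k]$- and $[n]\setminus[k]$-parts, use $[k]\in\F_1$ with convexity to get $\F\ua=\{A\cup B: A\in\F_1,\,B\subseteq[n]\setminus[k]\}$ (and dually for $\F\da$), observe $\F\ua\cap\F\da=\F$, and finish by inclusion--exclusion. Your write-up is a bit more explicit about the product structure $\F_i\ua\times\F_j\ua$, but the content is the same.
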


\begin{proof}
Obviously $|\F|=|\F_1||\F_2|$. Because $[k] \in \F_1$ and $\F_1$ is convex, $U \cap [k] \in \F_1$ for each $U \in \F\ua$. Thus, since $\emptyset \in \F_2$, we have that $\F\ua=\{A \cup B:A \in \F_1,B \subseteq [n] \setminus [k]\}$ and hence that $|\F\ua|=2^{n-k}|\F_1|$. Similarly, because $\emptyset \in \F_2$ and $\F_2$ is convex, $D \setminus [k] \in \F_2$ for each $D \in \F\da$. Thus, since $[k] \in \F_1$, we have that $\F\da=\{A \cup B:A \subseteq [k],B \in \F_2\}$ and hence that $|\F\da|=2^k|\F_2|$. Furthermore, we can see that $\F\ua \cap \F\da = \F$. So $|\F\ud|=|\F\ua|+|\F\da|-|\F|$ and the result follows.
\end{proof}

We can now show that, for each $m \leq 2^{n-2}$, there exists a convex $(n,m)$-witness family that has the form given by Lemma~\ref{L:sandwich}. With this, we can finally prove Theorem~\ref{T:isoperRecursive}.

\begin{lemma}\label{L:niceMinimalFamily}
Let $m$ and $n$ be nonnegative integers such that $m \leq 2^{n-2}$. There is an $(n,m)$-witness family such that $\F$ is convex and $\F \subseteq \C$.
\end{lemma}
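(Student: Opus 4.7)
The plan is a short sandwich argument combining Lemmas~\ref{L:CSNoGaps}, \ref{L:sandwich}, \ref{L:canMinFamSF} and \ref{L:noSetsWith1}. First, by Lemma~\ref{L:canMinFamSF}, fix a convex, strongly shifted $(n,m)$-witness family $\F$. If $\F \subseteq \C$, then $\F$ itself is the required family and there is nothing more to prove. Otherwise $\F \nsubseteq \C$, and since $m \leq 2^{n-2}$, Lemma~\ref{L:noSetsWith1} applies and delivers the lower bound $\Phi(n,m) = |\F\ud| \geq 2^{n-1}+m$.

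Next I would construct a competitor $\G \subseteq \C$ from inside the strip and pin down its size. Applying Lemma~\ref{L:CSNoGaps} with the ground set $\{2,\ldots,n-1\}$ (a harmless relabelling of $[n-2]$) yields a convex family $\P$ of subsets of $\{2,\ldots,n-1\}$ with $|\P|=m$ and $|\P\ud| = \Phi(n-2,m)$. Put $\G = \{\{1\}\cup P : P \in \P\}$. By Lemma~\ref{L:sandwich}, $\G$ is a convex $(n,m)$-family, manifestly contained in $\C$, satisfying
\[|\G\ud| = 2\Phi(n-2,m)+m \leq 2\cdot 2^{n-2} + m = 2^{n-1}+m,\]
where the inequality is the trivial bound $\Phi(n-2,m) \leq 2^{n-2}$.

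Combining the two estimates produces the chain
\[\Phi(n,m) \leq |\G\ud| \leq 2^{n-1}+m \leq |\F\ud| = \Phi(n,m),\]
whose first inequality is just the definition of $\Phi$. Hence equality holds throughout; in particular $|\G\ud| = \Phi(n,m)$, so $\G$ is a convex $(n,m)$-witness family contained in $\C$, as desired.

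I do not expect a genuine obstacle here: Lemma~\ref{L:noSetsWith1} has already done the real work by forcing strongly shifted witnesses that escape $\C$ to be large, and Lemma~\ref{L:sandwich} provides the matching upper bound from inside $\C$. The only minor bookkeeping issue is applying Lemma~\ref{L:CSNoGaps} on a ground set other than $[n-2]$, which is legitimate since the definition of $\Phi$ is invariant under relabelling of the ground set.
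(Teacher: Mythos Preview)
Your proposal is correct and follows essentially the same strategy as the paper: take a convex strongly shifted witness $\F$ via Lemma~\ref{L:canMinFamSF}, and if $\F \nsubseteq \C$ use Lemma~\ref{L:noSetsWith1} to get $\Phi(n,m) \geq 2^{n-1}+m$, then exhibit a convex competitor $\G \subseteq \C$ with $|\G\ud| \leq 2^{n-1}+m$ that must therefore be a witness. The only difference is in how the competitor $\G$ is built: the paper simply deletes minimal sets from $\C$ until $|\G|=m$ and uses Lemma~\ref{L:joinResults} (with $k=n-1$, $\F_2=\{\emptyset\}$) to compute $|\G\ud| = 2^{n-1}+m$ exactly, whereas you pull back a convex $(n-2,m)$-witness $\P$ through Lemma~\ref{L:sandwich} to get $|\G\ud| = 2\Phi(n-2,m)+m \leq 2^{n-1}+m$; either construction suffices, and yours even gives a slightly sharper intermediate bound.
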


\begin{proof}
The result is trivial for $m=0$, so assume $m \geq 1$. Note that $|\C|=2^{n-2} \geq m$. So by beginning with $\C$ and iteratively deleting minimal sets, we can obtain a convex $(n,m)$-family $\G$ such that $\G \subseteq \C$ and $[n-1] \in \G$. Thus $|\G\ud|=2^{n-1}+m$ by Lemma~\ref{L:joinResults} with $k=n-1$, $\F_1=\G$ and $\F_2=\{\emptyset\}$. So $\Phi(n,m) \leq 2^{n-1}+m$. If $\Phi(n,m) = 2^{n-1}+m$, then the proof is complete by the existence of $\G$, so we may suppose that $\Phi(n,m) < 2^{n-1}+m$. By Lemma~\ref{L:canMinFamSF} there exists a strongly shifted convex $(n,m)$-witness family $\F$. Because $|\F\ud| < 2^{n-1}+m$, we have $\F \subseteq \C$ by Lemma~\ref{L:noSetsWith1}.
\end{proof}

\begin{proof}[\textbf{\textup{Proof of Theorem~\ref{T:isoperRecursive}.}}]
We may suppose that $n \geq 2$ since it is easy to check the values of $\Phi(n,m)$ given for $n \in \{0,1\}$. We consider two cases according to the value of $m$.\smallskip

\noindent {\textbf{Case 1.}} Suppose that $m \in \{0,\ldots,2^{n-2}\}$. By Lemma~\ref{L:sandwich} it suffices to show that $\Phi(n,m) \geq 2\Phi(n-2,m)+m$. By Lemma~\ref{L:niceMinimalFamily} there is an $(n,m)$-witness family $\F$ such that $\F$ is convex and $\F\ \subseteq \C$. Let $\P = \{F \setminus \{1\}:F \in \F\}$ be a family of subsets of $\{2,\ldots,n-1\}$. Because $\F\ \subseteq \C$ and $\F$ is convex, we have that $|\P|=m$ and $\P$ is convex. So
\[\Phi(n,m)=|\F\ud| = 2|\P\ud|+m \geq 2\Phi(n-2,m)+m\]
where the first equality holds because $\F$ is an $(n,m)$-witness family, the second equality holds by Lemma~\ref{L:sandwich}, and the inequality holds by the definition of $\Phi(n-2,m)$.\smallskip

\noindent {\textbf{Case 2.}} Suppose that $m \in \{2^{n-2}+1,\ldots,2^n\}$. Let $s$ be as defined in the theorem. Lemma~\ref{L:noSetsWith1} implies $\Phi(n,2^{n-2}) \geq 3\cdot2^{n-2}$. Thus $s < 2^{n-2}$ since $m \geq 2^{n-2}+1$. So the result follows by Lemma~\ref{L:selfConjugate}.
\end{proof}

Proving Corollary~\ref{C:isoperBounds} from Theorem~\ref{T:isoperRecursive} requires some care but no new ideas.

\begin{proof}[\textbf{\textup{Proof of Corollary~\ref{C:isoperBounds}}}]
First observe that $m \leq 2^n$ implies $m \leq \sqrt{2^n m}$ and hence we have
\[\mfrac{\sqrt{2^n}}{f(n,m)}=\mfrac{\sqrt{2^n}}{\sqrt{2^{n+2}m}-m} \leq \mfrac{\sqrt{2^n}}{\sqrt{2^{n+2}m}-\sqrt{2^{n}m}} = \mfrac{1}{\sqrt{m}}.\]
So we have $f(n,m)+\sqrt{2^n} \sim f(n,m)$ as $n \rightarrow \infty$ with $m=\omega(1)$.

It only remains to show that $f(n,m) \leq \Phi(n,m) \leq  f(n,m)+\sqrt{2^{n}}$. We do this by induction on $n$. It is routine to check that the result holds for $n \in \{0,1\}$, so assume that $n \geq 2$. For brevity, we say that $m$ is \emph{nice} if $n$ is even and $m$ is a perfect square or if $n$ is odd and $m$ is twice a perfect square. Note that $m$ is nice if and only if $f(n,m)=\sqrt{2^{n+2}m}-m$ is an integer.\smallskip

\noindent {\textbf{Case 1.}} Suppose that $m \in \{0,\ldots,2^{n-2}\}$. Then, by induction,
\[f(n-2,m) \leq \Phi(n-2,m) \leq f(n-2,m)+\sqrt{2^{n-2}}.\]
and, since $f(n,m)=2f(n-2,m)+m$, the result follows easily by applying Theorem~\ref{T:isoperRecursive}. Furthermore, if $m$ is nice, then $\Phi(n-2,m)=f(n-2,m)$ by induction and this implies $\Phi(n,m)=f(n,m)$.\smallskip

\noindent {\textbf{Case 2.}} Suppose that $m \in \{2^{n-2}+1,\ldots,2^n\}$. Let $s$ be as defined in Theorem~\ref{T:isoperRecursive}. Throughout this case we will make use of the easily checked fact that $f(n,x)$ is increasing in $x$ on the interval $[0,2^{n}]$. By Theorem~\ref{T:isoperRecursive}, to establish $\Phi(n,m) \geq f(n,m)$ it suffices to show that $s \leq s_0$ where
\begin{equation}\label{E:s0def}
  s_0 = 2^n-f(n,m) = 2^n-\sqrt{2^{n+2}m}+m = \bigl(\sqrt{2^n}-\sqrt{m}\,\bigr)^2.
\end{equation}
Note that $s_0<2^{n-2}$ because $m > 2^{n-2}$ and $f(n,2^{n-2})=3\cdot2^{n-2}$. By the definition of $s$, to show that $s \leq s_0$ it suffices to show that $\Phi(n,\lfloor s_0+1 \rfloor) > 2^n-m$. This is indeed the case, since
\begin{equation}\label{E:littlePhiBound}
\Phi\bigl(n,\lfloor s_0+1 \rfloor\bigr) \geq f(n,\lfloor s_0+1 \rfloor\bigr) > f(n,s_0) = \sqrt{2^{n+2}s_0}-s_0 = 2^{n}-m
\end{equation}
where the first inequality follows from Case~1 and the final equality follows by substituting $\sqrt{s_0}=\sqrt{2^n}-\sqrt{m}$ and $s_0 = 2^n-\sqrt{2^{n+2}m}+m$  by \eqref{E:s0def} and then simplifying. Furthermore, if $m$ is nice then  \eqref{E:s0def} implies that $s_0$ is nice and so from Case~1 we have $\Phi(n,s_0)=f(n,s_0)=2^n-m$ and $\Phi(n,s_0+1) \geq f(n,s_0+1) > 2^n-m$. This shows that $s=s_0$ and hence that $\Phi(n,m)=2^n-s_0=f(n,m)$.

By Theorem~\ref{T:isoperRecursive}, to establish $\Phi(n,m) \leq f(n,m)+\sqrt{2^n}$ it suffices to show that $s \geq s_0-\sqrt{2^n}$. This holds trivially if $s_0 \leq \sqrt{2^n}$, so we may suppose otherwise. So, by the definition of $s$, to show that $s \geq s_0-\sqrt{2^n}$ it suffices to show that $\Phi(n,\lceil s_0-\sqrt{2^n}\, \rceil) \leq 2^n-m$. Now
\begin{multline}\label{E:littlePhiBound2}
  \Phi\bigl(n,\bigl\lceil s_0-\sqrt{2^n}\,\bigr\rceil) \leq f(n,\bigl\lceil s_0-\sqrt{2^n}\,\bigr\rceil)+\sqrt{2^n} <\\ f(n,s_0-\sqrt{2^n}+1)+\sqrt{2^n}= \sqrt{2^{n+2}\bigl(s_0-\sqrt{2^n}+1\bigr)}-s_0-1+\sqrt{2^{n+2}}
\end{multline}
where the first inequality follows from Case~1. Now we have $(s_0-\sqrt{2^n}+1)^{1/2}<\sqrt{s_0}-1$ because squaring both sides and simplifying reveals this to be equivalent to $s_0<2^{n-2}$, which we know holds. Using this fact in \eqref{E:littlePhiBound2}, simplifying and recalling the final equality in \eqref{E:littlePhiBound} we have, as required,
\[\Phi\bigl(n,\bigl\lceil s_0-\sqrt{2^n}\,\bigr\rceil) \leq  \sqrt{2^{n+2}s_0}-s_0-1 = 2^{n}-m-1.\qedhere\]
\end{proof}

The term $\sqrt{2^n}$ in the upper bound in Corollary~\ref{C:isoperBounds} is chosen for convenience; we have not attempted to optimise it. However, we note that it is easy to deduce from Theorem~\ref{T:isoperRecursive} that $\Phi(n,m)=f(n,m)+(\frac{3}{2}\sqrt{2}-2)\sqrt{2^{n}}$ when $m=1$ and $n$ is odd. Thus the term cannot in general be decreased by more than a constant fraction. We now state an obvious consequence of Theorem~\ref{T:isoperRecursive} that will be useful in what follows.

\begin{lemma}\label{L:strictlyMonotonic}
For any integer $n \geq 2$ we have
\[0=\Phi(n,0) < \Phi(n,1) < \cdots < \Phi(n,2^{n-2})=3\cdot2^{n-2}.\]
\end{lemma}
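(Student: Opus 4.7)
The plan is to derive this lemma directly from Theorem~\ref{T:isoperRecursive}: since $n \geq 2$ and $m$ ranges only up to $2^{n-2}$, the first case of the recursion applies uniformly across the whole range, and essentially all the work is already done.

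First I would handle the two endpoints. The value $\Phi(n,0)=0$ is immediate from the trivial observations noted just after the definition of $\Phi$ in Section~1. For $\Phi(n,2^{n-2})$, I would apply the first case of Theorem~\ref{T:isoperRecursive} with $m=2^{n-2}$ to get $\Phi(n,2^{n-2}) = 2\Phi(n-2,2^{n-2}) + 2^{n-2}$; since $2^{n-2}$ equals the size of the full power set of $[n-2]$, the basic facts give $\Phi(n-2,2^{n-2})=2^{n-2}$, so $\Phi(n,2^{n-2}) = 3\cdot 2^{n-2}$.

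For the strict inequalities, for each $m \in \{0,\ldots,2^{n-2}-1\}$ both $m$ and $m+1$ lie in the first-case range of the recursion (this is the key observation), so Theorem~\ref{T:isoperRecursive} yields
\[\Phi(n,m+1) - \Phi(n,m) = 2\bigl(\Phi(n-2,m+1) - \Phi(n-2,m)\bigr) + 1.\]
The right-hand side is at least $1$ because $\Phi(n-2,\cdot)$ is nondecreasing, a fact noted immediately after the definition of $\Phi$. Hence $\Phi(n,m+1) > \Phi(n,m)$.

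There is no real obstacle here: the lemma is essentially a direct reading of the first-case formula in Theorem~\ref{T:isoperRecursive}. The only thing to verify is that $2^{n-2} \leq 2^{n-2}$ puts both consecutive arguments inside the first-case regime, which is automatic.
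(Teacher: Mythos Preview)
Your proof is correct and follows essentially the same approach as the paper: both use the first case of Theorem~\ref{T:isoperRecursive} together with the nondecreasing property of $\Phi(n-2,\cdot)$ to get strict monotonicity via the ``$+m$'' term. The only difference is that the paper obtains $\Phi(n,2^{n-2})=3\cdot 2^{n-2}$ by citing the equality case of Corollary~\ref{C:isoperBounds}, whereas you compute it directly from the recursion and the trivial fact $\Phi(n-2,2^{n-2})=2^{n-2}$; your route is marginally more self-contained.
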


\begin{proof}
Since $\Phi(n-2,0) \leq \Phi(n-2,1) \leq \cdots \leq \Phi(n-2,2^{n-2})$, it follows directly from Theorem~\ref{T:isoperRecursive} that $\Phi(n,0) < \Phi(n,1) < \cdots < \Phi(n,2^{n-2})$. By the equality asserted by Corollary~\ref{C:isoperBounds}, we have $\Phi(n,2^{n-2})=3\cdot2^{n-2}$.
\end{proof}

We conclude this section with Corollary~\ref{C:isoperExplicit} below, in which we give another upper bound on $\Phi(n,m)$ that is tight in a range of cases.

\begin{corollary}\label{C:isoperExplicit}
Let $n$ and $m$ be nonnegative integers such that $m \leq 2^n$. For each $m \in \{0,\ldots,2^n\}$, we have
\[\Phi(n,m) \leq \sqrt{2^{n+a}}+\sqrt{2^{n-a}}\,m-m\]
where $a$ is the least element of $\{0,\ldots,n\}$ such that $m < 2^{a+1}$ and $a \equiv n \mod{2}$. Furthermore, we have equality if $2^a-2^{\lceil a/2\rceil}-2^{\lfloor a/2 \rfloor}+2 \leq m \leq 2^a +2^{\lceil a/2 \rceil}+2^{\lfloor a/2 \rfloor}-1$.
\end{corollary}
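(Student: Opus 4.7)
The upper bound comes from a single application of Lemma~\ref{L:joinResults}. Set $k=(n+a)/2$; the hypotheses $a \leq n$, $a \equiv n \mod{2}$ and $m < 2^{a+1}$ together force $m \leq 2^k$ (if $a=n$ this is immediate from $m \leq 2^n$, and if $a \leq n-2$ we have $2^{a+1} \leq 2^{(n+a)/2}$). Starting from $2^{[k]}$ and iteratively deleting minimal sets produces a convex family $\F_1 \subseteq 2^{[k]}$ with $|\F_1|=m$ and $[k] \in \F_1$. Setting $\F_2=\{\emptyset\}$ as a family of subsets of $\{k+1,\ldots,n\}$ and $\F=\{F_1 \cup F_2:F_1 \in \F_1,\,F_2 \in \F_2\}$, Lemma~\ref{L:joinResults} gives $|\F|=m$ and $|\F\ud| = 2^{n-k}m + 2^k - m = \sqrt{2^{n+a}} + \sqrt{2^{n-a}}\,m - m$, which is the required upper bound.

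For the equality statement I would proceed by strong induction on $n$, the $n=0$ case being trivial. Write $p=2^{\lceil a/2\rceil}$, $q=2^{\lfloor a/2\rfloor}$, and abbreviate the target value as $U=\sqrt{2^{n+a}}+\sqrt{2^{n-a}}\,m-m$. The inductive step splits into three scenarios. In \textbf{Scenario A} ($a=n$), minimising $|F\ua|+|F\da|-1=2^{|F|}+2^{n-|F|}-1$ over single-set families $\{F\}$ shows that $\Phi(n,1)=p+q-1$. The hypothesis $m \geq 2^n-p-q+2$ then forces $\Phi(n,1) > 2^n-m$, so Lemma~\ref{L:selfConjugate} gives $s=0$ and $\Phi(n,m)=2^n=U$. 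In \textbf{Scenario B} ($a \leq n-2$ and $m \leq 2^{n-2}$), the stated equality range depends only on $a$, so $m$ also lies in the equality range for $(n-2,a)$; by induction $\Phi(n-2,m)=\sqrt{2^{n-2+a}}+\sqrt{2^{n-2-a}}\,m-m$, and Theorem~\ref{T:isoperRecursive} gives $\Phi(n,m)=2\Phi(n-2,m)+m=U$.

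The remaining \textbf{Scenario C} ($a=n-2$ and $m > 2^{n-2}$) is the main obstacle; it is the only remaining case, because for $a \leq n-4$ a short calculation shows the equality range fits inside $[0,2^{n-2}]$. I would apply Lemma~\ref{L:selfConjugate} with $s=2^{n-1}-m$. Since $s < 2^{n-2}$, Theorem~\ref{T:isoperRecursive} combined with the trivial bound $\Phi(n-2,s) \leq 2^{n-2}$ gives $\Phi(n,s)=2\Phi(n-2,s)+s \leq 2^{n-1}+s=2^n-m$. To verify $s$ is maximal, one checks that $s+1 \in [2^{n-2}-p-q+2,\,2^{n-2}]$ (using $p$ and $q$ for $a=n-2$), which places $s+1$ simultaneously inside $[0,2^{n-2}]$ and inside the equality range for $(n,n-2)$. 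Scenario B---already handled at this inductive step---then yields $\Phi(n,s+1)=2^{n-1}+(s+1) > 2^n-m$, so Lemma~\ref{L:selfConjugate} concludes $\Phi(n,m)=2^n-s=2^{n-1}+m=U$. The crux is precisely this coupled verification of $\Phi(n,s) \leq 2^n-m < \Phi(n,s+1)$, carried out through careful tracking of the equality-range endpoints under the recursion.
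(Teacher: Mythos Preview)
Your proof is correct and follows essentially the same approach as the paper's: the upper bound via Lemma~\ref{L:joinResults} with $k=(n+a)/2$, and the equality claim by induction on $n$ with the same three-way split (the paper's Case~1 is your Scenario~B, and its Case~2 subcases $a=n-2$ and $a=n$ are your Scenarios~C and~A). The only point to tighten is that in Scenario~C you implicitly assume the $a$-parameter associated to $s+1$ is $n-2$; this requires $s+1 \geq 2^{n-3}$, which does follow from your bound $s+1 \geq 2^{n-2}-p-q+2$ but should be stated---the paper makes this check explicit, and it also derives $\Phi(n,s) \leq 2^n-m$ from Lemma~\ref{L:strictlyMonotonic} rather than from the trivial bound $\Phi(n-2,s) \leq 2^{n-2}$ as you do, but both routes work.
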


\begin{proof}
We first establish the upper bound on $\Phi(n,m)$. Let $k=\frac{1}{2}(n+a)$. Note that $2^{k} \geq 2^{a+1} > m$ if $a<n$ and $2^{k} = 2^{n} \geq m$ if $a=n$. Thus, by beginning with $2^{[k]}$ and iteratively deleting minimal sets, we can obtain a convex family of subsets of $[k]$ such that $[k] \in \F_1$ and $|\F_1|=m$. Then applying Lemma~\ref{L:joinResults} with $\F_2=\{\emptyset\}$ gives an $(n,m)$-family $\F$ with $|\F\ud| = 2^{k}+2^{n-k}m-m$ and hence the upper bound holds.

To prove the claim of equality, we proceed by induction on $n$. It is routine to check that it holds when $n \in \{0,1\}$. So suppose that $n \geq 2$ and $2^a-2^{\lceil a/2\rceil}-2^{\lfloor a/2 \rfloor}+2 \leq m \leq 2^a +2^{\lceil a/2 \rceil}+2^{\lfloor a/2 \rfloor}-1$.\smallskip

\noindent {\textbf{Case 1.}} Suppose that $m \leq 2^{n-2}$. Then $0 \leq a \leq n-2$. So by induction we have $\Phi(n-2,m) =\sqrt{2^{n+a-2}}+\sqrt{2^{n-a-2}}m-m$ and the result follows by applying Theorem~\ref{T:isoperRecursive}.\smallskip

\noindent {\textbf{Case 2.}} Suppose that $m > 2^{n-2}$. Then $a \in \{n-2,n\}$. Let $s$ be as defined in Theorem~\ref{T:isoperRecursive}.

If $a=n-2$, then $2^{n-2} < m \leq 2^{n-2} +2^{\lceil n/2 \rceil-1}+2^{\lfloor n/2 \rfloor-1}-1$, bearing in mind the condition of this case. Let $m'=2^{n-1}-m+1$ and note that $2^{n-2}-2^{\lceil n/2 \rceil-1}-2^{\lfloor n/2 \rfloor-1}+2 \leq m' \leq 2^{n-2}$. It can be checked that this implies that $2^{n-3} \leq m'$ and hence that $n-2$ is the least element $a'$ of $\{0,\ldots,n\}$ such that $m' < 2^{a'+1}$ and $a' \equiv n \mod{2}$. So, applying what we established in Case~1 to $m'$, we have $\Phi(n,m')=2^{n-1}+m'=2^n-m+1$. Further, by Lemma~\ref{L:strictlyMonotonic}, we have $\Phi(n,m'-1) \leq 2^n-m$. It follows that $s=m'-1=2^{n-1}-m$ and hence that $\Phi(n,m)=2^{n-1}+m$ by Theorem~\ref{T:isoperRecursive} as required.

If $a = n$, then $2^{n}-2^{\lceil n/2 \rceil}-2^{\lfloor n/2 \rfloor}+2 \leq m \leq 2^n$. So we have $\Phi(n,0)=0$ obviously and $\Phi(n,1)=2^{\lceil n/2 \rceil}+2^{\lfloor n/2 \rfloor}-1>2^n-m$ from Case~1. Thus $s=0$ and $\Phi(n,m)=2^n$ by Theorem~\ref{T:isoperRecursive} as required.
\end{proof}

\section{Proof of Theorem~\ref{T:isoperQuick}}\label{S:quickProof}

Theorem~\ref{T:isoperQuick} will follow from Theorem~\ref{T:isoperRecursive}, but we first need to establish some properties of the functions $\delta_k$ defined in the statement of Theorem~\ref{T:isoperQuick}. We begin with some basic properties in Lemma~\ref{L:deltaProps} and then move on to a more involved property in Lemma~\ref{L:deltaRecursive}.

\begin{lemma}\label{L:deltaProps}\phantom{a}
\begin{itemize}
    \item[\textup{(i)}]
$\delta_k(0)=0$ and $\delta_k(k)=1$ for all integers $k \geq 1$.
    \item[\textup{(ii)}]
$2^t\delta_{2^t}(x)=x$ for all integers $t \geq 0$ and $x \in \{0,\ldots,2^t\}$.
    \item[\textup{(iii)}]
$2^t\delta_{2^t-1}(x)=x+1$ for all integers $t \geq 1$ and $x \in \{1,\ldots,2^t-1\}$.
    \item[\textup{(iv)}]
$2^t\delta_{k}(x+1)-(x+1) \geq 2^t\delta_{k}(x)-x$ for all integers $k$, $t$ and $x$ such that $1\leq k \leq 2^t$ and $x \in \{0,\ldots,k-1\}$.
\end{itemize}
\end{lemma}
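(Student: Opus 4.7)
My plan is to establish the four parts in the stated order, each by an induction that exploits the recursion~\eqref{E:deltaDef}; parts (i)--(iii) are routine and almost all the work lies in (iv).

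For (i), I induct on $k$: the base $k=1$ is immediate, and for $k\geq 2$ the first branch at $x=0$ gives $\delta_k(0)=\tfrac12\delta_{\lfloor k/2\rfloor}(0)=0$ while the second branch at $x=k$ gives $\delta_k(k)=\tfrac12+\tfrac12\delta_{\lceil k/2\rceil}(\lceil k/2\rceil)=1$ by the inductive hypothesis. For (ii), I induct on $t$: the base $t=0$ is the definition of $\delta_1$, and for $t\geq 1$ the equalities $\lfloor 2^t/2\rfloor=\lceil 2^t/2\rceil=2^{t-1}$ mean both branches of \eqref{E:deltaDef} reduce to $\delta_{2^{t-1}}$, so the inductive hypothesis delivers $2^t\delta_{2^t}(x)=x$ by direct computation in each branch. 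Part (iii) also follows by induction on $t$ (base $t=1$ immediate): for $t\geq 2$ we have $\lfloor(2^t-1)/2\rfloor=2^{t-1}-1$ and $\lceil(2^t-1)/2\rceil=2^{t-1}$, so for $x\in\{1,\ldots,2^{t-1}-2\}$ the first branch reduces to (iii) at $t-1$, while for $x\in\{2^{t-1}-1,\ldots,2^t-1\}$ the second branch reduces to (ii) applied to $\delta_{2^{t-1}}$ after substituting $y=x-(2^{t-1}-1)\in\{0,\ldots,2^{t-1}\}$; both subranges simplify to $(x+1)/2^t$.

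For (iv) I induct on $k$, quantifying over all valid $t$ and $x$. The base $k=1$ is immediate since $2^t\delta_1(x+1)-(x+1)-(2^t\delta_1(x)-x)=2^t-1\geq 0$ whenever $t\geq 0$. For $k\geq 2$ I split on the location of $x$ and $x+1$ relative to $\lfloor k/2\rfloor$. When both lie in $\{0,\ldots,\lfloor k/2\rfloor-1\}$ the first branch yields $2^t(\delta_k(x+1)-\delta_k(x))=2^{t-1}(\delta_{\lfloor k/2\rfloor}(x+1)-\delta_{\lfloor k/2\rfloor}(x))\geq 1$ by the inductive hypothesis applied at $\lfloor k/2\rfloor\leq 2^{t-1}$, and the symmetric case where both lie in $\{\lfloor k/2\rfloor,\ldots,k\}$ is handled identically via the second branch together with $\lceil k/2\rceil\leq 2^{t-1}$.

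The remaining boundary case, which I expect to be the main obstacle, is $x=\lfloor k/2\rfloor-1$, $x+1=\lfloor k/2\rfloor$. Here the first branch gives $\delta_k(x)=\tfrac12\delta_{\lfloor k/2\rfloor}(\lfloor k/2\rfloor-1)$ and the second branch gives $\delta_k(x+1)=\tfrac12+\tfrac12\delta_{\lceil k/2\rceil}(0)=\tfrac12$ using (i). The required inequality reduces to $\delta_{\lfloor k/2\rfloor}(\lfloor k/2\rfloor-1)\leq 1-2^{1-t}$, which I obtain by reapplying the inductive hypothesis (iv) to $k'=\lfloor k/2\rfloor\leq 2^{t-1}$ at the pair $x=k'-1,\ x+1=k'$: monotonicity of $2^{t-1}\delta_{k'}(\cdot)-(\cdot)$ combined with $\delta_{k'}(k')=1$ from (i) yields $2^{t-1}\delta_{k'}(k'-1)\leq 2^{t-1}-1$, which rearranges to exactly the required bound and closes the induction.
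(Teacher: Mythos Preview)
Your proof is correct and follows essentially the same approach as the paper's: inductions on $k$ (for (i) and (iv)) and on $t$ (for (ii) and (iii)), with (iv) handled via the three-way split on whether $x+1$ crosses the threshold $\lfloor k/2\rfloor$, and the boundary case closed by rewriting $1=\delta_{\lfloor k/2\rfloor}(\lfloor k/2\rfloor)$ and applying the inductive hypothesis at $k'=\lfloor k/2\rfloor\leq 2^{t-1}$. The paper's write-up is terser but the argument is the same.
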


\begin{proof}
An easy proof by induction on $k$ establishes (i). Induction on $t$ establishes (ii), and from this (iii) can then be proved by induction on $t$.

Observe that the inequality in (iv) is equivalent to $\delta_{k}(x+1)-\delta_k(x) \geq \frac{1}{2^t}$. This holds when $k=1$ (noting $t \geq 0$). When $k \geq 2$, applying \eqref{E:deltaDef} and simplifying, the statement to be proved is equivalent to $\delta_{\lfloor k/2 \rfloor}(x+1) - \delta_{\lfloor k/2 \rfloor}(x) \geq \frac{1}{2^{t-1}}$ if $x \leq \lfloor\frac{k}{2}\rfloor-2$, to $\delta_{\lceil k/2 \rceil}(x+1-\lfloor \frac{k}{2} \rfloor) - \delta_{\lceil k/2 \rceil}(x-\lfloor \frac{k}{2} \rfloor) \geq \frac{1}{2^{t-1}}$ if $x \geq \lfloor\frac{k}{2}\rfloor$, and to $1 - \delta_{\lfloor k/2 \rfloor}(\lfloor \tfrac{k}{2} \rfloor-1) \geq \frac{1}{2^{t-1}}$ if $x = \lfloor\frac{k}{2}\rfloor-1$ (using (i)). In each case this follows by induction, noting that $\lfloor \tfrac{k}{2} \rfloor \leq \lceil \tfrac{k}{2} \rceil \leq 2^{t-1}$ and, in the last case, that $1=\delta_{\lfloor k/2 \rfloor}(\lfloor \tfrac{k}{2} \rfloor)$.
\end{proof}

\begin{lemma}\label{L:deltaRecursive}
Let $k$, $q$ and $t$ be positive integers such that $k+q=2^t$, and let $\ell \in \{0,\ldots,k\}$. The greatest element $r$ of $\{0,\ldots,q\}$ such that $2^t\delta_{q}(r)-r \leq k-\ell$ is $q - 2^t\delta_{k}(\ell)+\ell$.
\end{lemma}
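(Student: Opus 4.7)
The plan is to prove the lemma by induction on $t$. The base case $t=1$ forces $k=q=1$, whereupon $\delta_1(x)=x$ and both sides of the asserted equality reduce to $1-\ell$.

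For $t \geq 2$, I would first dispose of the two extreme cases $(k,q)=(1,2^t-1)$ and $(k,q)=(2^t-1,1)$ directly, since these are precisely the cases in which the inductive step below cannot be applied cleanly to a pair with both coordinates positive. Both are short: for $k=1$ one has $\delta_k(\ell) \in \{0,1\}$, and for $q=1$ the function $r \mapsto 2^t\delta_q(r)-r$ takes only the values $0$ and $2^t-1$; the verification uses only $\delta_1(x)=x$ and the identity $2^t\delta_{2^t-1}(\ell)=\ell+1$ from Lemma~\ref{L:deltaProps}(iii).

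With $k, q \geq 2$ in hand, I would split the inductive step four ways: the parity of $k$ (equivalently of $q$, since $k+q=2^t$ is even) and whether $\ell \leq \lfloor k/2 \rfloor -1$ or $\ell \geq \lfloor k/2 \rfloor$. In each case, \eqref{E:deltaDef} rewrites $2^t\delta_k(\ell)$ as $2^{t-1}\delta_{k'}(\ell')$ or $2^{t-1}+2^{t-1}\delta_{k'}(\ell')$ for an appropriate choice of $k' \in \{\lfloor k/2 \rfloor, \lceil k/2 \rceil\}$ and $\ell' \in \{\ell, \ell-\lfloor k/2 \rfloor\}$. Crucially, the partner $q'$ (one of $\lfloor q/2 \rfloor, \lceil q/2 \rceil$) with $k' + q' = 2^{t-1}$ has $k', q' \geq 1$, so the induction hypothesis applies to $(k',q',t-1,\ell')$ and delivers an analogue $r'^*$. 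Routine algebra then shows $r^* = q-2^t\delta_k(\ell)+\ell$ equals either $r'^*$ or $\lfloor q/2 \rfloor + r'^*$, and both the target inequality $2^t\delta_q(r^*)-r^* \leq k-\ell$ and (when $r^* < q$) its strict version at $r^*+1$ reduce, via the appropriate branch of the recursion for $\delta_q$, to the inequalities supplied by the induction hypothesis.

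The main obstacle is the corner case $r'^* = q'$, which occurs exactly when $\ell' = 0$ (and in some sub-cases forces $r^* = q$, contributing no obstacle at all). In the two sub-cases where $r'^* = q'$ and $r^* < q$, the argument of $\delta_q$ straddles $\lfloor q/2 \rfloor$ between $r^*$ and $r^*+1$, so checking strict inequality at $r^*+1$ requires the auxiliary estimate $2^{t-1}\delta_{q''}(1) > 1$ for an appropriate $q''$ with $q'' < 2^{t-1}$. This I would prove by a short separate induction on $q''$ using \eqref{E:deltaDef}: it is trivial for $q''=1$; for $q'' \in \{2,3\}$ one has $\delta_{q''}(1)=1/2$ and the estimate reduces to $2^{t-2}>1$, which holds because $q''<2^{t-1}$ forces $t \geq 3$; and for $q'' \geq 4$ the recursion gives $\delta_{q''}(1)=\tfrac{1}{2}\delta_{\lfloor q''/2 \rfloor}(1)$, passing the bound to $\lfloor q''/2 \rfloor < 2^{t-2}$ by induction. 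Apart from this auxiliary estimate, the whole argument is routine bookkeeping — the main cost is keeping careful track of which branch of \eqref{E:deltaDef} applies to $r^*$ and to $r^*+1$ in each sub-case.
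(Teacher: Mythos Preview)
Your approach is correct and is fundamentally the same induction on $t$ as the paper's, but the paper organises the case analysis more economically. Two points are worth noting.

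First, the paper begins the inductive step with the pivot identity
\[
2^t\delta_q\bigl(\lfloor\tfrac{q}{2}\rfloor\bigr)-\lfloor\tfrac{q}{2}\rfloor=\lceil\tfrac{k}{2}\rceil,
\]
which tells you immediately (via Lemma~\ref{L:deltaProps}(iv)) which half of $\{0,\ldots,q\}$ contains $r$, according to whether $k-\ell \geq \lceil k/2\rceil$ or not. It then splits only on $\ell \leq \lfloor k/2\rfloor$ versus $\ell>\lfloor k/2\rfloor$, placing the borderline value $\ell=\lfloor k/2\rfloor$ in the \emph{first} case (using that $\delta_k(\lfloor k/2\rfloor)=\tfrac12$ can be read from either branch). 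This choice of boundary is exactly what eliminates your corner case: with it, the situation $r'^{*}=q'$ but $r^{*}<q$ never arises, so your auxiliary estimate $2^{t-1}\delta_{q''}(1)>1$ becomes unnecessary, and the parity split is not needed either. Your four-way split with the auxiliary lemma is perfectly valid, just heavier.

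Second, verifying the inequality at $r^{*}$ and its strict failure at $r^{*}+1$ only shows that $r^{*}$ is the \emph{greatest} element with the property once you know $r\mapsto 2^t\delta_q(r)-r$ is nondecreasing; you should cite Lemma~\ref{L:deltaProps}(iv) explicitly for this step.
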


\begin{proof}
We proceed by induction on $t$. If $\ell=0$, then $q - 2^t\delta_{k}(\ell)+\ell=q$ by Lemma~\ref{L:deltaProps}(i) and the result can be seen to hold using Lemma~\ref{L:deltaProps}(i) and $2^t-q=k$. If $k=\ell=1$, then $q - 2^t\delta_{k}(\ell)+\ell=q-2^t+1=0$ by Lemma~\ref{L:deltaProps}(i) and the result can be seen to hold using Lemma~\ref{L:deltaProps}(iii). So we may assume that $\ell \geq 1$ and $k \geq 2$. If $q=1$, then $q - 2^t\delta_{k}(\ell)+\ell=1-(\ell+1)+\ell=0$ by Lemma~\ref{L:deltaProps}(iii) and the result holds by Lemma~\ref{L:deltaProps}(i), noting that $k=2^{t}-1$ and $\ell \geq 1$. So we may further assume that $q \geq 2$ and hence that $t \geq 2$.

Let $\ell \in \{0,\ldots,k\}$, and let $r$ be the greatest element of $\{0,\ldots,q\}$ such that $2^t\delta_{q}(r)-r \leq k-\ell$. We must show that $r=q - 2^t\delta_{k}(\ell)+\ell$. We will frequently and tacitly use the fact that $k+q=2^t$. We first note that
\begin{equation}\label{E:midway}
2^t\delta_{q}(\lfloor\tfrac{q}{2}\rfloor)-\lfloor\tfrac{q}{2}\rfloor = 2^{t-1}-\lfloor\tfrac{q}{2}\rfloor = \lceil\tfrac{k}{2}\rceil
\end{equation}
where the first equality follows by \eqref{E:deltaDef} and Lemma~\ref{L:deltaProps}(i). We consider two cases according to the value of $\ell$.\smallskip

\noindent \textbf{Case 1.} Suppose that $\ell \in \{1,\ldots,\lfloor\frac{k}{2}\rfloor\}$. Let $k'=\lfloor\tfrac{k}{2}\rfloor$ and $q'=\lceil\tfrac{q}{2}\rceil$, and note that $k'+q'=2^{t-1}$. Applying \eqref{E:deltaDef}, along with Lemma~\ref{L:deltaProps}(i) if $\ell = \lfloor\frac{k}{2}\rfloor$, we must show that $r=q - 2^{t-1}\delta_{k'}(\ell)+\ell$. Since $k-\ell \geq \lceil\frac{k}{2}\rceil$ we must have $r = \lfloor\tfrac{q}{2}\rfloor+r'$ for some $r' \in \{0,\ldots,q'\}$ by \eqref{E:midway}. Then, using \eqref{E:deltaDef},
\[2^t\delta_{q}(r)-r=2^{t-1}+2^{t-1}\delta_{q'}(r')-(\lfloor\tfrac{q}{2}\rfloor+r')=\lceil\tfrac{k}{2}\rceil+2^{t-1}\delta_{q'}(r')-r'.\]
Thus, by the definitions of $r$ and $r'$, we have that $r'$ is the greatest element of $\{0,\ldots,q'\}$ such that $2^{t-1}\delta_{q'}(r')-r' \leq k'-\ell$. By induction $r'=q' - 2^{t-1}\delta_{k'}(\ell)+\ell$, and hence $r=q - 2^{t-1}\delta_{k'}(\ell)+\ell$ as required.\smallskip

\noindent \textbf{Case 2.} Suppose that $\ell \in \{\lfloor\frac{k}{2}\rfloor+1,\ldots,k\}$. Let $k'=\lceil\tfrac{k}{2}\rceil$, $q'=\lfloor\frac{q}{2}\rfloor$ and $\ell'=\ell-\lfloor\tfrac{k}{2}\rfloor$, and note that $k'+q'=2^{t-1}$. Applying \eqref{E:deltaDef}, we must show that $r$ equals
\[q - 2^{t-1} -2^{t-1}\delta_{k'}(\ell')+\lfloor\tfrac{k}{2}\rfloor+\ell'=q' - 2^{t-1}\delta_{k'}(\ell')+\ell'.\]
Since $k-\ell \leq \lceil\frac{k}{2}\rceil-1$ we must have $r \leq \lfloor\tfrac{q}{2}\rfloor-1$ by \eqref{E:midway} and Lemma~\ref{L:deltaProps}(iv). So $2^t\delta_{q}(r)-r=2^{t-1}\delta_{q'}(r)-r$ by \eqref{E:deltaDef} and since  $k'-\ell'=k-\ell$, we have that $r$ is the greatest element of $\{0,\ldots,q'\}$ such that $2^{t-1}\delta_{q'}(r)-r \leq k'-\ell'$. Thus $r=q' - 2^{t-1}\delta_{k'}(\ell')+\ell'$ by induction as required.
\end{proof}

We are now ready to prove Theorem~\ref{T:isoperQuick}.

\begin{proof}[\textbf{\textup{Proof of Theorem~\ref{T:isoperQuick}.}}]
We proceed by induction on $n$. It is routine to check that the result holds for $n \in \{0,1\}$, so fix integers $n \geq 2$ and $m \in \{1,\ldots,2^n\}$. As in the theorem statement, let $\kappa=1$ if $n$ is even and $\kappa=2$ if $n$ is odd, and let $c$ be the positive integer such that $\kappa c(c-1) \leq m < \kappa c(c+1)$. Let $\nu=\lfloor \frac{n}{2} \rfloor$ and note that $2^\nu=\sqrt{2^n/\kappa}$. For brevity, for positive integers $a$ and $x$ such that $\kappa a(a-1) \leq x \leq \kappa a(a+1)$, we let
\[\varphi_a(x)=2a-1+2\delta_{2 \kappa a}\bigl(x-\kappa a(a-1)\bigr)\]
so that the theorem claims that $\Phi(n,m)=\kappa2^\nu\varphi_c(m)-m$.

\noindent \textbf{Case 1.}  Suppose that $m \leq 2^{n-2}$. Then by induction $\Phi(n-2,m)=\kappa2^{\nu-1}\varphi_c(m)-m$. So by Theorem~\ref{T:isoperRecursive} we have, as required,
\[\Phi(n,m)=2\Phi(n-2,m)+m=2(\kappa2^{\nu-1}\varphi_c(m)-m)+m=\kappa2^\nu\varphi_c(m)-m.\]
In particular, this implies that
\begin{equation}\label{E:phi1}
\Phi(n,1)= (\kappa+1)2^{\nu}-1
\end{equation}
and this will be important to note when considering the remaining two cases.\smallskip

\noindent \textbf{Case 2.}  Suppose that $2^{n-2} < m  \leq 2^n-\Phi(n,1)$.
As in Theorem~\ref{T:isoperRecursive}, let $s$ be the greatest element of $\{0,\ldots,2^{n-2}\}$ such that $\Phi(n,s) \leq 2^n-m$. Observe that $1 \leq s < 2^{n-2}$ by the condition of this case and the fact that $\Phi(n,2^{n-2})=3\cdot2^{n-2}$ by Lemma~\ref{L:strictlyMonotonic}. Define integers $d = 2^\nu-c$, $m_0=\kappa c (c-1)$, $m_1=\kappa c (c+1)$, $s_0=\kappa d (d-1)$ and $s_1=\kappa d (d+1)$. By the conditions of this case and \eqref{E:phi1} we can deduce that $2^{\nu-1} \leq c \leq 2^{\nu}-1$ and hence that $1 \leq d \leq 2^{\nu-1}$.

Now, for any $s' \in \{0,\ldots,s_1-s_0\}$ such that $1 \leq s_0+s' \leq 2^{n-2}$, we have
\begin{equation}\label{E:sPhi}
\Phi(n,s_0+s')=\kappa2^{\nu}\varphi_d(s_0+s')-(s_0+s')=2^n-m_1+\kappa2^{\nu+1}\delta_{s_1-s_0}(s')-s'
\end{equation}
where the first equality has been proved in Case 1 (when $s'=s_1-s_0$ note that $\varphi_{d}(s_1)=\varphi_{d+1}(s_1)=2d+1$ by Lemma~\ref{L:deltaProps}(i)) and the second equality can be verified by applying the definition of $\varphi_d$ and substituting for $s_0$ and $d$. When $d=1$ we have $s \geq 1 \geq s_0$ and, otherwise, setting $s'=0$ in \eqref{E:sPhi} gives $\Phi(n,s_0) = 2^n-m_1$ by Lemma~\ref{L:deltaProps}(i) and hence $s \geq s_0$ because $m < m_1$. When $d=2^{\nu-1}$ we have $s < 2^{n-2} < s_1$. Otherwise, setting $s'=s_1-s_0$ in \eqref{E:sPhi} shows that
\[\Phi(n,s_1) = 2^n-m_1+\kappa2^{\nu+1}-s_1+s_0=2^n-m_0\]
where the first equality follows by Lemma~\ref{L:deltaProps}(i) and the second follows because $m_1-m_0+s_1-s_0=2\kappa(c+d)=\kappa2^{\nu+1}$. Thus, using  Lemma~\ref{L:strictlyMonotonic}, we have $s \leq s_1$ because $m \geq m_0$, and we also have $s_1 < 2^{n-2}$ since $d \leq 2^{\nu-1}-1$. So in all cases $s_0 \leq s \leq s_1$ and, as we already saw, $s < 2^{n-2}$.

Let $s''=s-s_0$ and note that $0 \leq s'' \leq s_1-s_0$ and $s_0+s'' < 2^{n-2}$. Thus, from \eqref{E:sPhi}, $\Phi(n,s) \leq 2^n-m$ is equivalent to $\kappa2^{\nu+1}\delta_{s_1-s_0}(s'')-s'' \leq m_1-m$. Further, if $s'' < s_1-s_0$, then $\Phi(n,s+1) > 2^n-m$ is equivalent to $\kappa2^{\nu+1}\delta_{s_1-s_0}(s''+1)-(s''+1) > m_1-m$. So, bearing in mind Lemma~\ref{L:deltaProps}(iv), it follows from the definition of $s$ that $s''$ is the greatest element $s'$ of $\{0,\ldots,s_1-s_0\}$ such that $\kappa2^{\nu+1}\delta_{s_1-s_0}(s')-s' \leq m_1-m$. Applying Lemma~\ref{L:deltaRecursive} with $t=\lceil\frac{n}{2}\rceil+1$, $k=m_1-m_0$, $q=s_1-s_0$ and $\ell=m-m_0$ we have that $s'' = s_1-s_0-\kappa2^{\nu+1}\delta_{m_1-m_0}(m-m_0)+m-m_0$ (note $2^t=\kappa2^{\nu+1}$). Hence $s=s_1-\kappa2^{\nu+1}\delta_{m_1-m_0}(m-m_0)+m-m_0$. By Theorem~\ref{T:isoperRecursive}, $\Phi(n,m)=2^n-s$ and, by substituting for $s$, $s_1$, $m_0$, $m_1$ and $d$, this is equivalent to $\Phi(n,m)=\kappa2^{\nu}\varphi_c(m)-m$ as asserted by the theorem.\smallskip

\noindent \textbf{Case 3.}  Suppose that $m > 2^n-\Phi(n,1)$. Then $s=0$ by definition and hence $\Phi(n,m)=2^n$ by Theorem~\ref{T:isoperRecursive}. So we only need to show that $\kappa2^\nu\varphi_c(m)-m=2^n$.  Now,
\begin{equation}\label{E:case3Calc}
\kappa2^{\nu}\varphi_c(m)=
\left\{
  \begin{array}{ll}
    2^{n+1}-\kappa2^\nu+\kappa2^{\nu+1}\delta_{\kappa2^{\nu+1}}(m-2^n+\kappa2^{\nu}) & \hbox{if $c=2^{\nu}$} \\
    2^{n+1}-3\kappa2^\nu+\kappa2^{\nu+1}\delta_{\kappa2^{\nu+1}-2\kappa}(m-2^n+3\kappa2^{\nu}-2\kappa) & \hbox{if $c=2^{\nu}-1$.}
  \end{array}
\right.
\end{equation}
When $c=2^{\nu}$, we have $\kappa2^{\nu+1}\delta_{\kappa2^{\nu+1}}(x)=x$ for each $x \in \{0,\ldots,\kappa2^{\nu+1}\}$ by Lemma~\ref{L:deltaProps}(ii), and using this fact in \eqref{E:case3Calc} shows that $\kappa2^\nu\varphi_c(m)-m=2^n$ as required. So we can suppose that $c<2^{\nu}$. Then, using the condition of this case and \eqref{E:phi1}, we have that $c = 2^\nu-1$ and that $m-2^n+3\kappa2^{\nu}-2\kappa$ is at least $2^\nu$ if $n$ is even and is at least $3\cdot2^\nu-2$ if $n$ is odd. Using \eqref{E:deltaDef} and Lemma~\ref{L:deltaProps}(iii) we can deduce $2^{\nu+1}\delta_{2^{\nu+1}-2}(x)=x+2$ for each $x \in \{2^{\nu},\ldots,2^{\nu+1}-2\}$ and $2^{\nu+2}\delta_{2^{\nu+2}-4}(x)=x+4$ for each $x \in \{3\cdot2^\nu-2,\ldots,2^{\nu+2}-4\}$. Considering cases according to the parity of $n$ and using these facts in \eqref{E:case3Calc}, we again have $\kappa2^\nu\varphi_c(m)-m=2^n$ as required.
\end{proof}

\section{Proof of Theorem~\ref{T:totalOrdering}}\label{S:orderProof}

Let $n \geq 2$ be an integer. To prove Theorem~\ref{T:totalOrdering}, we will recursively construct the chain $\F_0 \subsetneq \cdots \subsetneq \F_{2^n}$ of families of subsets of $[n]$ with the appropriate properties from a chain $\P_0 \subsetneq \cdots \subsetneq \P_{2^{n-2}}$ of families of subsets of $[n-2]$ with the appropriate properties. For $m \in \{0,\ldots,2^{n-2}\}$ we will form $\F_m$ by applying Lemma~\ref{L:sandwich} to $\P_m$ (with the ground set suitably relabelled). Then the key families amongst those in $\{\F_{2^{n-2}+1},\ldots,\F_{2^{n}}\}$ will be formed by taking the conjugates of the families in $\{\F_0,\ldots,\F_{2^{n-2}}\}$ according to a certain notion of conjugacy which we now proceed to define.

For any family $\F$ of subsets of $[n]$, we denote by $\F^*$ the family $2^{[n]} \setminus (\rho(\F))\ud$. We think of $\F^*$ as the conjugate of $\F$. Notice that this notation is consistent with our definition of $\C^*_{n,a}$ from the introduction because
\begin{align}
  \C_{n,a} &= \{A \subseteq [n]: [\tfrac{n-a}{2}] \subseteq A \subseteq [\tfrac{n+a}{2}]\} \notag \\
  (\C_{n,a})\ud &= \{A \subseteq [n]: [\tfrac{n-a}{2}] \subseteq A \text{ or } A \subseteq [\tfrac{n+a}{2}]\} \notag \\
  \rho((\C_{n,a})\ud) &= \{A \subseteq [n]: \{\tfrac{n+a}{2}+1,\ldots,n\} \subseteq A \text{ or } A \subseteq \{\tfrac{n-a}{2}+1,\ldots,n\}\} \notag \\
  2^{[n]} \setminus \rho((\C_{n,a})\ud) &= \{A \subseteq [n]: \{\tfrac{n+a}{2}+1,\ldots,n\} \nsubseteq A \text{ and } A \nsubseteq \{\tfrac{n-a}{2}+1,\ldots,n\}\}. \label{E:coreConj}
\end{align}
Further, for any integer $\ell \in \{0,\ldots,2^{n-2}\}$, we abbreviate $2^n-\Phi(n,\ell)$ to $\ell^*$, where the value of $n$ will always be clear from context. We observe some basic properties of our conjugacy transformation.

\begin{lemma}\label{L:conjProps}
Let $\F_\ell$ be an $(n,\ell)$-witness family with $\ell < 2^{n-2}$. Then $\F^*_\ell$ is a convex $(n,\ell^*)$-witness family and $\Phi(n,m)=2^n-\ell$ for each $m \in \{(\ell+1)^*+1,\ldots,\ell^*\}$.
\end{lemma}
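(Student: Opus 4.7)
The plan is to reduce each assertion to Lemma~\ref{L:selfConjugate} together with Lemma~\ref{L:strictlyMonotonic}, after unpacking the definition $\F^*_\ell=2^{[n]}\setminus(\rho(\F_\ell))\ud$. Throughout, set $\G=\rho(\F_\ell)$; then Lemma~\ref{L:relbetops}(ii) yields $|\G\ud|=|\F_\ell\ud|=\Phi(n,\ell)$, so $|\F^*_\ell|=2^n-\Phi(n,\ell)=\ell^*$, confirming that $\F^*_\ell$ is indeed an $(n,\ell^*)$-family.

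For convexity of $\F^*_\ell$ I would use the general observation that the complement of any $\ud$-closure is convex. If $A,B\in\F^*_\ell$ and $A\subseteq C\subseteq B$, then $C\in\G\ud$ would produce either some $G\in\G$ with $G\subseteq C\subseteq B$ (forcing $B\in\G\ua\subseteq\G\ud$) or some $G\in\G$ with $A\subseteq C\subseteq G$ (forcing $A\in\G\da\subseteq\G\ud$), each contradicting the definition of $\F^*_\ell$.

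For the witness property I would mimic the first paragraph of the proof of Lemma~\ref{L:selfConjugate}. By definition of $\F^*_\ell$, no set in $\F^*_\ell$ lies in $\G\ud$, and essentially the same argument as in the convexity step shows that no set in $(\F^*_\ell)\ud$ lies in $\G$; indeed, any such set would, as a subset or superset of some member of $\F^*_\ell$, drag that member into $\G\ua\subseteq\G\ud$ or $\G\da\subseteq\G\ud$. Hence $(\F^*_\ell)\ud\subseteq 2^{[n]}\setminus\G$, which gives $|(\F^*_\ell)\ud|\leq 2^n-\ell$. Matching this against $\Phi(n,\ell^*)$ is deferred to the final step.

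The range statement (which also supplies the matching lower bound $|(\F^*_\ell)\ud|\geq 2^n-\ell$ at $m=\ell^*$) follows by direct application of Lemma~\ref{L:selfConjugate}: for $m\in\{0,\ldots,2^n\}$, $\Phi(n,m)=2^n-s$ where $s$ is the greatest element of $\{0,\ldots,2^n\}$ with $\Phi(n,s)\leq 2^n-m$. For $m\in\{(\ell+1)^*+1,\ldots,\ell^*\}$, the condition $m\leq\ell^*$ is equivalent to $\Phi(n,\ell)\leq 2^n-m$, and $m>(\ell+1)^*$ is equivalent to $\Phi(n,\ell+1)>2^n-m$; since $\ell<2^{n-2}$, Lemma~\ref{L:strictlyMonotonic} guarantees $\Phi(n,\ell)<\Phi(n,\ell+1)$, so the range is nonempty and monotonicity of $\Phi(n,\cdot)$ forces $s=\ell$, yielding $\Phi(n,m)=2^n-\ell$. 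Specialising to $m=\ell^*$ gives $\Phi(n,\ell^*)=2^n-\ell$, which together with the bound from the previous paragraph shows that $\F^*_\ell$ is an $(n,\ell^*)$-witness family. I do not anticipate any real obstacle; the only subtlety is that the hypothesis $\ell<2^{n-2}$ is used solely through Lemma~\ref{L:strictlyMonotonic} to secure the gap $\Phi(n,\ell)<\Phi(n,\ell+1)$, without which the range $\{(\ell+1)^*+1,\ldots,\ell^*\}$ would collapse and the value of $s$ identified by Lemma~\ref{L:selfConjugate} could exceed $\ell$.
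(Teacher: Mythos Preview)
Your argument is correct and mirrors the paper's proof almost exactly: both compute $|\F^*_\ell|=\ell^*$ from $|(\rho(\F_\ell))\ud|=\Phi(n,\ell)$, prove convexity by the same ``complement of a $\ud$-closure is convex'' observation, bound $|(\F^*_\ell)\ud|\le 2^n-\ell$ via disjointness of $(\F^*_\ell)\ud$ and $\rho(\F_\ell)$, and identify $\Phi(n,\ell^*)=2^n-\ell$ together with the range statement. The only cosmetic difference is that you invoke Lemma~\ref{L:selfConjugate} and monotonicity directly to pin down $s=\ell$, whereas the paper cites the second case of Theorem~\ref{T:isoperRecursive} (which is itself derived from Lemma~\ref{L:selfConjugate}); the content is the same.
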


\begin{proof}
Given the definitions of $\ell^*$ and $(\ell+1)^*$, saying that $\Phi(n,m)=2^n-\ell$ for each $m \in \{(\ell+1)^*+1,\ldots,\ell^*\}$ is simply a restatement of the second part of the recurrence in Theorem~\ref{T:isoperRecursive}. Note that the set $\{(\ell+1)^*+1,\ldots,\ell^*\}$ is nonempty by Lemma~\ref{L:strictlyMonotonic}.

Since $|(\F_\ell)\ud|=\Phi(n,\ell)$, we have $|\F^*_\ell|=2^n-\Phi(n,\ell)=\ell^*$ by the definition of $\F^*_\ell$. Since no set in $\F^*_\ell$ is in $(\rho(\F_\ell))\ud$, no set in $\rho(\F_\ell)$ is in $(\F^*_\ell)\ud$ and hence $|(\F^*_\ell)\ud| \leq 2^n-\ell = \Phi(n,\ell^*)$. So $\F^*_\ell$ is an $(n,\ell^*)$-witness family. Finally, if $\F^*_\ell$ were not convex then there would be sets $F_1, F_2 \in 2^{[n]} \setminus (\rho(\F_\ell))\ud$ and $A \in (\rho(\F_\ell))\ud$ such that $F_1 \subseteq A \subseteq F_2$. But this is impossible since $F_1 \in (\rho(\F_\ell))\da$ if $A \in (\rho(\F_\ell))\da$ and $F_2 \in (\rho(\F_\ell))\ua$ if $A \in (\rho(\F_\ell))\ua$.
\end{proof}

The families obtained via Lemma~\ref{L:sandwich} and their conjugates will form a subchain of the chain we require, but there will be spaces that need to be filled in. The final tool that we need for our proof of Theorem~\ref{T:totalOrdering} will allow us to do this.

\begin{lemma}\label{L:interpolate}
Let $\F_i$ be a convex $(n,i)$-family and $\F_j$ be a convex $(n,j)$-family such that $\F_i \subsetneq \F_j$. There exist convex families $\F_{i+1},\ldots,\F_{j-1}$ such that $|\F_m|=m$ for each $m\in \{i+1,\ldots,j-1\}$ and $\F_i \subsetneq \F_{i+1} \subsetneq \cdots \subsetneq \F_{j-1} \subsetneq \F_{j}$.
\end{lemma}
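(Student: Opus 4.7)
The plan is to induct on $j - i$. The base case $j - i = 1$ is trivial. For $j - i \geq 2$, it suffices to produce a single convex $(n, i+1)$-family $\F^*$ with $\F_i \subsetneq \F^* \subsetneq \F_j$, because applying the induction hypothesis to the pair $\F^*$ and $\F_j$ then furnishes the remaining families $\F_{i+2}, \ldots, \F_{j-1}$.

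To build $\F^*$, I will find a set $A \in \G := \F_j \setminus \F_i$ such that $\F_i \cup \{A\}$ is convex, and take $\F^* = \F_i \cup \{A\}$. The choice of $A$ is dictated by extremality and splits into two cases. \emph{Case 1:} If some element of $\G$ is a subset of some element of $\F_i$, let $A$ be maximal under inclusion among such elements of $\G$. \emph{Case 2:} Otherwise, let $A$ be any element of $\G$ that is minimal under inclusion.

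To verify convexity of $\F_i \cup \{A\}$, I will suppose for contradiction that some $B \notin \F_i \cup \{A\}$ satisfies $F_1 \subseteq B \subseteq F_2$ for some $F_1, F_2 \in \F_i \cup \{A\}$. Convexity of $\F_i$ forces $A \in \{F_1, F_2\}$, and convexity of $\F_j$ (which contains $A, F_1, F_2$) forces $B \in \F_j$, so $B \in \G \setminus \{A\}$. In the configuration $A \subsetneq B \subseteq F_2$ with $F_2 \in \F_i$, the set $B \in \G$ is a subset of $F_2 \in \F_i$ strictly containing $A$, which contradicts either the maximality of $A$ in Case~1 or the defining hypothesis of Case~2 (since $A \subseteq F_2$ would already place $A$ in Case~1). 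In the configuration $F_1 \subseteq B \subsetneq A$ with $F_1 \in \F_i$, Case~2 contradicts minimality of $A$, while in Case~1 there is some $F' \in \F_i$ with $A \subseteq F'$, yielding $F_1 \subseteq B \subseteq F'$ and hence $B \in \F_i$ by convexity of $\F_i$, a final contradiction.

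I expect the main obstacle to be simply presenting the case analysis cleanly; the substantive point is that the extremality of $A$, together with the convexity of $\F_i$ and $\F_j$, simultaneously blocks every way in which $\F_i \cup \{A\}$ could fail to be convex.
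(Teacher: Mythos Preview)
Your proof is correct, but it takes a different route from the paper's. The paper also inducts, but in the opposite direction: it removes a set from the larger family rather than adding one to the smaller. Specifically, it picks $A \in \F_j \setminus \F_i$ that is minimal or maximal in $\F_j$ and sets $\F_{j-1} = \F_j \setminus \{A\}$. Such an $A$ exists because if every minimal and maximal set of $\F_j$ lay in $\F_i$, then any $B \in \F_j \setminus \F_i$ would sit between two sets of $\F_i$, contradicting convexity of $\F_i$; and removing a minimal or maximal set from a convex family clearly keeps it convex. This top-down approach avoids the case split you need: convexity of $\F_{j-1}$ is immediate, and the only work is the one-line existence argument. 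Your bottom-up approach is equally valid but pays for the simplicity of ``just add one set'' with a more delicate choice of $A$ and the two-case verification. Either direction proves the lemma; the paper's is a touch cleaner here because deletion of extremal elements interacts more directly with convexity than insertion does.
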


\begin{proof}
Fix a value of $i$. The result is trivially true when $j=i+1$, so we assume inductively that it holds when $j = h$ for some $h \in \{i+1,\ldots,2^n-1\}$ and show it holds when $j=h+1$. Given convex families $\F_i$ and $\F_{h+1}$, let $\F_{h}=\F_{h+1} \setminus \{A\}$ where $A$ is a set in $\F_{h+1} \setminus \F_i$ such that $A$ is minimal or maximal in $\F_{h+1}$. Such a set must exist because, since $\F_i \subsetneq \F_{h+1}$, if each minimal or maximal set in $\F_{h+1}$ was in $\F_i$, then $\F_i$ could not be convex. Note that $\F_h$ is convex because $A$ is minimal or maximal in $\F_{h+1}$. Now, by induction, there exist convex families $\F_{i+1},\ldots,\F_{h-1}$ such that $|\F_m|=m$ for each $m\in \{i+1,\ldots,h-1\}$ and $\F_i \subsetneq \F_{i+1} \subsetneq \cdots \subsetneq \F_{h-1} \subsetneq \F_{h}$. The families  $\F_{i+1},\ldots,\F_{h}$ show that the result holds for $j=h+1$.
\end{proof}

\begin{proof}[\textbf{\textup{Proof of Theorem~\ref{T:totalOrdering}.}}]
We prove the result by induction on $n$. It is easy to verify the result holds for $n=0$ and $n=1$. Assume inductively that, for some integer $n\geq 2$, there is a chain of families $\P_0 \subsetneq \cdots \subsetneq \P_{2^{n-2}}$ such that $\P_m$ is a convex $(n-2,m)$-witness family for each $m \in \{0,\ldots,2^{n-2}\}$ and $\P_{2^a}=\C_{n-2,a}$ for each $a \in \{0,\ldots,n-4\}$ such that $a \equiv n \mod{2}$ (note $|\C_{n-2,a}|=2^a$). Obviously $\P_{2^{n-2}}=\C_{n-2,n-2}=2^{[n-2]}$.

We will first define a subchain of the chain $\F_0 \subsetneq \cdots \subsetneq \F_{2^n}$ we desire. For each $\ell \in \{0,\ldots,2^{n-2}\}$, let $\F_\ell$ be the $(n,m)$-family given by $\{\{1\} \cup P':P' \in \P'_\ell\}$ where $\P'_\ell=\{\sigma(P):P \in \mathcal{P_\ell}\}$ and $\sigma(P)=\{x+1:x\in P\}$. We claim that
\begin{equation}\label{E:famSeq}
\F_0 \subsetneq \F_1 \subsetneq \cdots \subsetneq \F_{2^{n-2}} =  \F_{2^{n-2}}^* \subsetneq \F_{2^{n-2}-1}^* \subsetneq \cdots \subsetneq \F_0^*
\end{equation}
is a chain of convex witness families of subsets of $[n]$. Observe that $\F_{2^{n-2}}^*=\C_{n,n-2}^*=\C_{n,n-2}=\F_{2^{n-2}}$ using \eqref{E:coreConj}. For each $\ell \in \{0,\ldots,2^{n-2}\}$ it can be seen that $\F_\ell$ is a convex $(n,\ell)$-witness family using Theorem~\ref{T:isoperRecursive} and Lemma~\ref{L:sandwich}, and hence also that $\F^*_\ell$ is a convex $(n,\ell^*)$-witness family by Lemma \ref{L:conjProps}. So to prove our claim it remains to show \eqref{E:famSeq} is indeed a chain. Since $\P_0 \subsetneq \cdots \subsetneq \P_{2^{n-2}}$, we have $\F_0 \subsetneq \cdots \subsetneq \F_{2^{n-2}}$. Thus we also have $\rho(\F_0) \subsetneq \cdots \subsetneq \rho(\F_{2^{n-2}})$ and hence $\F_{2^{n-2}}^* \subseteq \F_{2^{n-2}-1}^* \subseteq \cdots \subseteq \F_0^*$. So, in fact, we have $\F_{2^{n-2}}^* \subsetneq \F_{2^{n-2}-1}^* \subsetneq \cdots \subsetneq \F_0^*$ because $|\F^*_\ell|=\ell^*$ since $\F^*_\ell$ is an $(n,\ell^*)$-witness family and $(2^{n-2})^*<(2^{n-2}-1)^*<\cdots<0^*$ by Lemma~\ref{L:strictlyMonotonic}. Thus \eqref{E:famSeq} is indeed a chain of convex witness families as claimed. Further, for each nonnegative integer $a \in \{0,\ldots,n-2\}$ with $a \equiv n \mod{2}$, we have $\F_{2^a}=\C_{n,a}$ since $\P_{2^a}=\C_{n-2,a}$. Thus \eqref{E:famSeq} includes the families $\C_{n,a}$ and $\C^*_{n,a}$ for each $a \in \{0,\ldots,n-2\}$ such that $a \equiv n \mod{2}$.

We now claim we can extend \eqref{E:famSeq} to the desired chain $\F_0 \subsetneq \cdots \subsetneq \F_{2^n}$ by, for each $\ell \in \{0,\ldots,2^{n-2}\}$, setting $\F_{\ell^*} = \F_{\ell}^*$ and interpolating families $\F_{(\ell+1)^*+1}, \ldots, \F_{\ell^*-1}$ between $\F_{(\ell+1)^*}$ and $\F_{\ell^*}$ such that
\[\F_{\ell+1}^* = \F_{(\ell+1)^*} \subsetneq \F_{(\ell+1)^*+1} \subsetneq \cdots \subsetneq \F_{\ell^*-1} \subsetneq \F_{\ell^*} = \F_{\ell}^*\]
is a chain of families for which $|\F_m|=m$ and $\F_m$ is convex for each integer $m$ with $(\ell+1)^* < m < \ell^*$. Lemma~\ref{L:interpolate} guarantees such families exist. For each integer $m$ with $(\ell+1)^* < m \leq \ell^*$, $\F_m$ will be a witness family because $\Phi(n,m)=2^n-\ell$ by Lemma~\ref{L:conjProps} and $|(\F_m)\ud| \leq |(\F_{\ell}^*)\ud|=2^n-\ell$  since $\F_m \subseteq \F_{\ell}^*$. So we can indeed construct a chain $\F_0 \subsetneq \cdots \subsetneq \F_{2^n}$ with the appropriate properties.
\end{proof}

\section{Cross-Sperner families}\label{S:CS}

Our results also give an answer to a problem raised by Gerbner et al.\ in \cite{GerLemPalPatSze}. Let $\F$ and $\G$ be families of subsets of $[n]$. We say the pair $(\F,\G)$ is \emph{cross-Sperner} if $F \nsubseteq G$ and $G \nsubseteq F$ for all $F \in \F$ and $G \in \G$. Pairs of cross-Sperner families have been studied in \cite[\S8]{AhlZha} and \cite{GerLemPalPatSze}. In \cite[\S1]{GerLemPalPatSze}, Gerbner et al.\ raised the question of determining the maximum possible value of $|\G|$ when $|\F|=m$ and $(\F,\G)$ is cross-Sperner. We denote this maximum value by $g(n,m)$. Observe that $(\F,\G)$ is cross-Sperner if and only if $\G \subseteq 2^{[n]} \setminus \F\ud$. From this it is clear that $g(n,m)=2^n-\Phi(n,m)$ and so our work here gives a complete solution to their question. The question of Gerbner et al.\ arose out of their study of the maximum value of $|\F|+|\G|$ under the additional condition that $\F$ and $\G$ are both nonempty (without this assumption it is easy to see that this maximum value is $2^n$). They proved Theorem~\ref{T:ImproveGerbnerEtAl} below under the assumption that $n$ is large and stated that they believed that in fact it held for all $n$. We are able to confirm their belief.

\begin{theorem}\label{T:ImproveGerbnerEtAl}
Let $(\F,\G)$ be a cross-Sperner pair of families of subsets of $[n]$ such that neither $\F$ nor $\G$ is empty. Then $n\geq 2$ and
\[|\F|+|\G| \leq 2^n-2^{\lceil n/2 \rceil} - 2^{\lfloor n/2 \rfloor}+2.\]
Furthermore, for $n \neq 3$, we have equality if and only if one $\F$ or $\G$ consists of exactly one set $A$ of
size $\lceil \frac{n}{2} \rceil$ or $\lfloor \frac{n}{2} \rfloor$ and the other family consists of all subsets of $[n]$ that are neither supersets nor subsets of $A$.
\end{theorem}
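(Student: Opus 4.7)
The strategy is to recast the problem in terms of $\Phi$ and exploit Theorem~\ref{T:isoperRecursive}. Since $(\F,\G)$ is cross-Sperner exactly when $\G \cap \F\ud = \emptyset$, writing $m=|\F|$, we have
\[
|\F|+|\G| \leq m + 2^n - |\F\ud| \leq m + 2^n - \Phi(n,m).
\]
Nonemptiness of $\G$ forces $\Phi(n,m) \leq 2^n - 1$, which by the second case of Theorem~\ref{T:isoperRecursive} amounts to $m \in \{1,\ldots,2^n - \Phi(n,1)\}$. Using $\Phi(n,1) = 2^{\ceil{n/2}} + 2^{\floor{n/2}} - 1$ from Corollary~\ref{C:isoperExplicit}, the claimed upper bound is equivalent to $\Phi(n,m) - m \geq \Phi(n,1) - 1$ for all such $m$. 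A direct check confirms that no nonempty cross-Sperner pair exists for $n \leq 1$, so $n \geq 2$.

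This key inequality is handled by a two-case analysis mirroring the recursion in Theorem~\ref{T:isoperRecursive}. If $m \leq 2^{n-2}$, then $\Phi(n,m) - m = 2\Phi(n-2,m) \geq 2\Phi(n-2,1) = \Phi(n,1) - 1$ by monotonicity of $\Phi(n-2,\cdot)$. If $m > 2^{n-2}$, the second case gives $\Phi(n,m) = 2^n - s$ for some $s \in \{1,\ldots,2^{n-2}\}$ with $\Phi(n,s) \leq 2^n - m$. Applying the first case to $s$ then yields
\[
\Phi(n,m) - m = 2^n - s - m \geq \Phi(n,s) - s \geq \Phi(n,1) - 1.
\]

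For the equality case when $n \neq 3$, chasing equality through the chain requires $|\F\ud| = \Phi(n,m)$, $\G = 2^{[n]} \setminus \F\ud$, and $\Phi(n,m) - m = \Phi(n,1) - 1$. In the range $m \leq 2^{n-2}$ the last equation reduces to $\Phi(n-2,m) = \Phi(n-2,1)$. Combining the strict monotonicity of $\Phi(n-2,\cdot)$ on $\{0,\ldots,2^{n-4}\}$ from Lemma~\ref{L:strictlyMonotonic} with the bound $\Phi(n-2,2^{n-4}+1) \geq 3 \cdot 2^{n-4}+1 > \Phi(n-2,1)$, derivable from the recursion and valid for all $n \geq 4$, forces $m = 1$; the exceptional case $n = 3$ is exactly where this argument breaks, since $\Phi(1,1) = \Phi(1,2) = 2$ allows $m=2$ as well. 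With $m = 1$, the singleton $\F = \{A\}$ satisfies $2^{|A|} + 2^{n-|A|} - 1 = \Phi(n,1)$, which forces $|A| \in \{\ceil{n/2}, \floor{n/2}\}$ and determines $\G = 2^{[n]} \setminus \{A\}\ud$. For $m > 2^{n-2}$, the analogous analysis applied to $s$ pins $s = 1$ and $m = 2^n - \Phi(n,1)$, giving $|\G| = 1$; a symmetric minimisation of $2^{|A|} + 2^{n - |A|}$ over $|A| \in \{0,\ldots,n\}$ then yields the analogous characterisation with the roles of $\F$ and $\G$ swapped. The main obstacle is this equality analysis: it requires simultaneously tracking the strict inequalities through both branches of the recursion and correctly isolating $n = 3$ as the sole exceptional value traceable to the failure of strict monotonicity for $\Phi(1,\cdot)$.
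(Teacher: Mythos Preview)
Your proof is correct and reaches the same conclusion as the paper, but via a somewhat different route. The paper first assumes without loss of generality that $|\F| \leq |\G|$, which immediately confines $m=|\F|$ to $\{1,\ldots,2^{n-2}\}$ and avoids your second case $m>2^{n-2}$ entirely; your handling of that range is just the symmetric situation in disguise. For the crucial strict inequality $\Phi(n,m)-m>\Phi(n,1)-1$ when $m\geq 2$ and $n\geq 4$, the paper invokes the analytic lower bound of Corollary~\ref{C:isoperBounds} to obtain $\Phi(n,2)-2>\Phi(n,1)-1$ and then uses Lemma~\ref{L:strictlyMonotonic} to push this to larger $m$. You instead exploit the identity $\Phi(n,m)-m=2\Phi(n-2,m)$ coming from the first branch of Theorem~\ref{T:isoperRecursive} and argue via monotonicity of $\Phi(n-2,\cdot)$, which is more elementary and sidesteps Corollary~\ref{C:isoperBounds} altogether. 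Your approach has the virtue of making the role of $n=3$ transparent---it is precisely the failure of strict monotonicity of $\Phi(1,\cdot)$---whereas the paper simply checks small $n$ by hand. The paper's symmetry reduction, on the other hand, buys brevity: it halves the equality analysis and spares the reader the compressed ``symmetric minimisation'' step you need for the range $m>2^{n-2}$, where one must still argue separately that $\G=\{A\}$ forces $|A|\in\{\lfloor n/2\rfloor,\lceil n/2\rceil\}$ and $\F=2^{[n]}\setminus\{A\}\ud$ via $|\F|\leq 2^n-|\{A\}\ud|$.
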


\begin{proof}
It is routine to check that $n \geq 2$ and that the result holds for $n \in \{2,3\}$, so suppose $n \geq 4$.
Suppose without loss of generality that $|\F| \leq |\G|$ and let $m=|\F|$. Note that $m \leq 2^{n-2}$ for otherwise we would have  $|\mathcal{G}| \leq 2^n-\Phi(n,m) \leq 2^n-\Phi(n,2^{n-2}) = 2^{n-2}$ using Lemma~\ref{L:strictlyMonotonic}. By the definition of $g(n,m)$, we have $|\F|+|\G| \leq g(n,m)+m$. Observe that
\begin{equation}\label{E:GerbnerEtAl}
g(n,1)+1=2^n-\Phi(n,1)+1=2^n-2^{\lceil n/2 \rceil} - 2^{\lfloor n/2 \rfloor}+2
\end{equation}
where the last equality is a rephrasing of \eqref{E:phi1}. We consider two cases according to whether $m=1$.

\noindent \textbf{Case 1.} Suppose that $m=1$. Then, $|\F|+|\G| \leq g(n,1)+1$ and the bound of the theorem holds by \eqref{E:GerbnerEtAl}. If $\F=\{A\}$ for some subset $A$ of $[n]$, then $|\F\ud|=2^{k}+2^{n-k}-1$ where $k=|A|$. Using this it is easy to confirm that we have equality in this bound if and only if $\F=\{A\}$ and $|A| \in \{\lfloor \frac{n}{2}\rfloor,\lceil \frac{n}{2} \rceil\}$.\smallskip

\noindent \textbf{Case 2.} Suppose that $m \in \{2,\ldots,2^{n-2}\}$. Since $|\F|+|\G| \leq g(n,m)+m$, by \eqref{E:GerbnerEtAl} it suffices to show that $g(n,m)+m<g(n,1)+1$ (Gerbner et al.\ were able to do this only under the assumption that $n$ is large). Using \eqref{E:GerbnerEtAl} and the lower bound of Corollary~\ref{C:isoperBounds} we can deduce that $\Phi(n,2)-2>\Phi(n,1)-1$. Furthermore, from Lemma~\ref{L:strictlyMonotonic} we have $\Phi(n,m)-m \geq \Phi(n,2)-2$. So $\Phi(n,m)-m>\Phi(n,1)-1$ and hence $g(n,m)+m<g(n,1)+1$ as required.
\end{proof}

\section{Conclusion}\label{S:conc}

It is clear from the arbitrary choices afforded by the proof of Theorem~\ref{T:totalOrdering}, that the total ordering whose existence it asserts is by no means unique. It can also be shown that the families $\mathcal{C}_{n,a}$ and $\mathcal{C}^*_{n,a}$ are not the unique witness families of their respective sizes. In fact, for even values of $n$, applying Lemma~\ref{L:joinResults} with $k=\frac{n}{2}$ and any choice of $\F_1$ and $\F_2$ such that $|\F_1|=|\F_2|=c$ will produce an $(n,c^2)$-witness family $\F$ (it can be checked that $|\F\ud|=|\Phi(n,c^2)|$ using the equality in Corollary~\ref{C:isoperBounds}). Similarly, for odd values of $n$, applying Lemma~\ref{L:joinResults} with $k=\frac{n-1}{2}$ and any choice of $\F_1$ and $\F_2$ such that $|\F_1|=c$ and $|\F_2|=2c$ will produce an $(n,2c^2)$-witness family. This allows many different witness families to be produced. This contrasts with the Kruskal-Katona problem for which it is known that the only families of $\binom{x}{k}$ $k$-sets with minimum shadow size are those that comprise all $k$-subsets of an $x$-set (see \cite{Kee} for further results). Likewise, it is known that the Hamming balls are the only families of their respective sizes for which the inequality in Harper’s theorem holds with equality (see \cite{Rat} for further results). An easy generalisation of the proof of Lemma~\ref{L:noSetsWith1} does show, however, that the families $\mathcal{C}_{n,a}$ are the unique strongly shifted witness families of their respective sizes. By analogy to Harper's theorem, it would be of interest to find a concise definition for a total ordering of the subsets of $[n]$ that induces a chain of families satisfying the conditions of Theorem~\ref{T:totalOrdering}.\bigskip

\noindent\textbf{Acknowledgments.}
Thanks to Ian Roberts for helpful discussions and comments. Adam Gowty was supported by an Australian Government Research Training Program Scholarship. Daniel Horsley was supported by Australian Research Council grants FT160100048 and DP220102212.

\pagebreak

\begin{table}[H]
\begin{center}
\begin{footnotesize}
\begin{tabular}{c|ccccc}\label{t:phitable}
$m$  & $\Phi(2,m)$ & $\Phi(3,m)$ & $\Phi(4,m)$ & $\Phi(5,m)$ & $\Phi(6,m)$ \\ \hline
$0$  & $0$         & $0$         & $0$         & $0$         & $0$         \\
$1$  & $3$         & $5$         & $7$         & $11$        & $15$        \\
$2$  & $4$         & $6$         & $10$        & $14$        & $22$        \\
$3$  & $4$         & $7$         & $11$        & $17$        & $25$        \\
$4$  & $4$         & $8$         & $12$        & $20$        & $28$            \\
$5$  &             & $8$         & $13$        & $21$        &  $31$           \\
$6$  &             & $8$         & $14$        & $22$        &  $34$           \\
$7$  &             & $8$         & $15$        & $23$        &  $37$           \\
$8$  &             & $8$         & $15$        & $24$        &  $38$           \\
$9$  &             &             & $15$        & $25$        &    $39$         \\
$10$ &             &             & $16$        & $26$        &   $42$          \\
$11$ &             &             & $16$        & $27$        &   $43$          \\
$12$ &             &             & $16$        & $28$        &   $44$          \\
$13$ &             &             & $16$        & $29$        &   $45$          \\
$14$ &             &             & $16$        & $29$        &   $46$          \\
$15$ &             &             & $16$        & $29$        &    $47$         \\
$16$ &             &             & $16$        & $30$        &   $48$          \\
$17$ &             &             &             & $30$        &   $49$          \\
$18$ &             &             &             & $30$        &   $50$          \\
$19$ &             &             &             & $31$        &   $51$          \\
$20$ &             &             &             & $31$        &   $52$          \\
$21$ &             &             &             & $31$        &   $53$          \\
$22$ &             &             &             & $32$        &  $54$           \\
$23$ &             &             &             & $32$        &  $55$           \\
$24$ &             &             &             & $32$        &  $55$           \\
$25$ &             &             &             & $32$        &  $55$           \\
$26$ &             &             &             & $32$        &  $56$          \\
$27$ &             &             &             & $32$        &  $57$           \\
$28$ &             &             &             & $32$        &  $58$           \\
$29$ &             &             &             & $32$        &  $58$           \\
$30$ &             &             &             & $32$        &  $58$           \\
$31$ &             &             &             & $32$        &  $59$           \\
$32$ &             &             &             & $32$        &  $59$           \\
$33$ &             &             &             &             &   $59$          \\
$34$ &             &             &             &             &   $60$          \\
$35$ &             &             &             &             &   $60$          \\
$36$ &             &             &             &             &   $60$          \\
$37$ &             &             &             &             &   $61$          \\
$38$ &             &             &             &             &   $61$          \\
$39$ &             &             &             &             &   $61$          \\
$40$ &             &             &             &             &   $62$          \\
$41$ &             &             &             &             &   $62$          \\
$42$ &             &             &             &             &   $62$          \\
$43$ &             &             &             &             &   $63$          \\
$44$ &             &             &             &             &   $63$          \\
$45$ &             &             &             &             &   $63$          \\
$46$ &             &             &             &             &   $63$          \\
$47$ &             &             &             &             &   $63$          \\
$48$ &             &             &             &             &   $63$          \\
$49$ &             &             &             &             &   $63$          \\
$50 - 64$ &             &             &             &             & $64$            \\

\end{tabular}
\end{footnotesize}

\vspace{1em}
\caption{Values of $\Phi(n,m)$ for $n \in \{2,3,4,5,6\}$.}\label{Tab:smallVals}
\end{center}
\end{table}

\begin{figure}[H]
\begin{center}
\includegraphics[width=\textwidth]{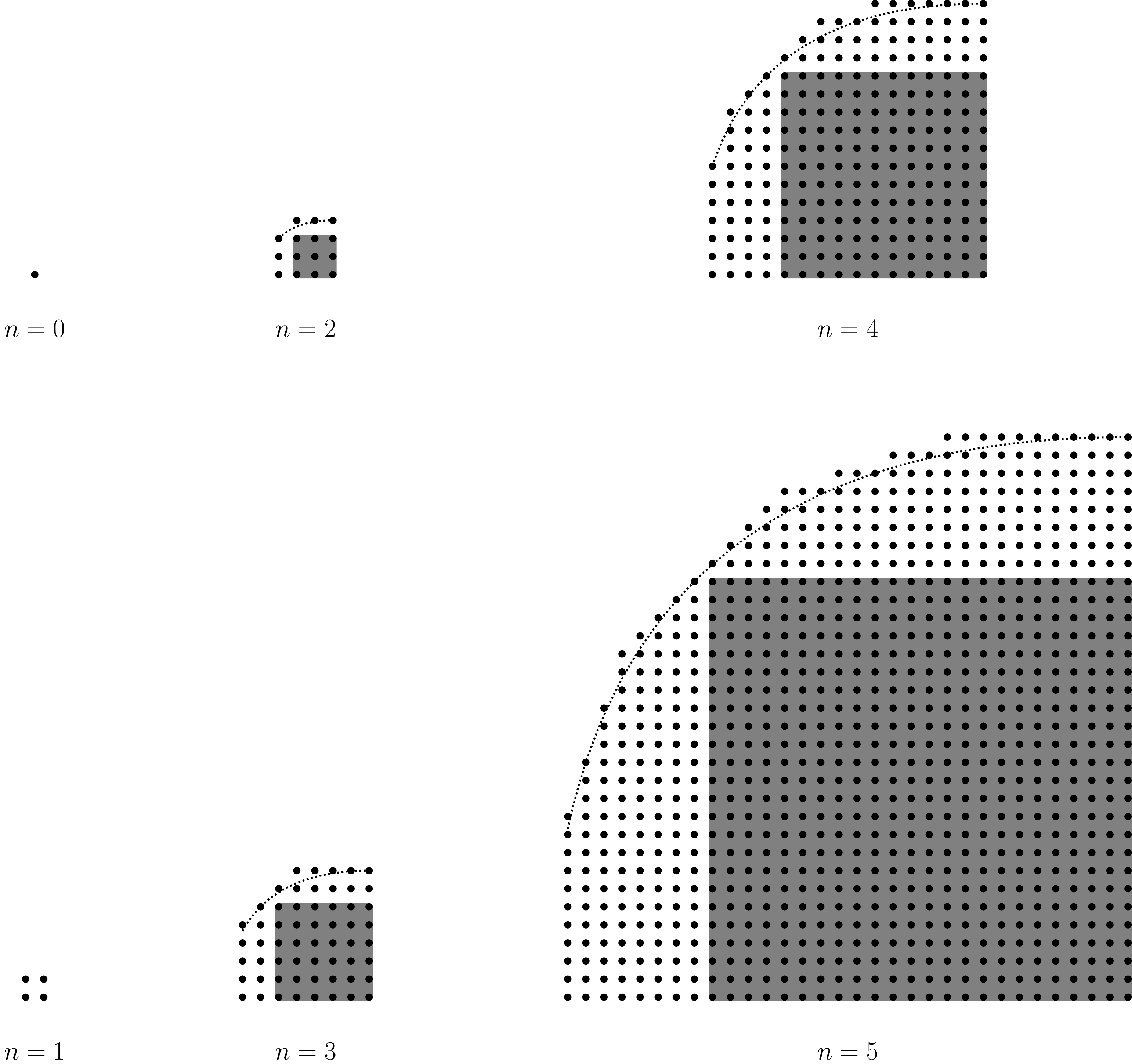}
\caption{Ferrers diagrams for the integer partition $(\Phi(n,1),\Phi(n,2),\ldots,\Phi(n,2^n))$ for $n \in \{0,1,2,3,4,5\}$. A dot is placed at the point $(x,y)$ for each $x \in \{1,\ldots,2^n\}$ and $y \in \{1,\ldots,\Phi(n,x)\}$. For $n \in \{2,3,4,5\}$ we shade the Durfee square of the partition to highlight its self-conjugacy and also plot the lower bound of Corollary~\ref{C:isoperBounds} as a dashed line.}\label{F:Ferrers}
\end{center}
\end{figure}

\end{document}